\title{Amenability of quadratic automaton groups}
\author{Gideon Amir, Omer Angel, Balint Vir\'ag}
\date{October 2021}
  \crefname{theorem}{Theorem}{Theorems}
  \crefname{thm}{Theorem}{Theorems}
  \crefname{lemma}{Lemma}{Lemmas}
  \crefname{lem}{Lemma}{Lemmas}
  \crefname{remark}{Remark}{Remarks}
  \crefname{prop}{Proposition}{Propositions}
  \crefname{defn}{Definition}{Definitions}
  \crefname{corollary}{Corollary}{Corollaries}
  \crefname{section}{Section}{Sections}
  \crefname{figure}{Figure}{Figures}
\newtheorem{thm}{Theorem}[section]
\newtheorem{lemma}[thm]{Lemma}
\newtheorem{corollary}[thm]{Corollary}
\newtheorem{prop}[thm]{Proposition}
\newtheorem{conj}[thm]{Conjecture}
\theoremstyle{definition}
\newtheorem{remark}[thm]{Remark}
\newtheorem{question}[thm]{Question}
\numberwithin{equation}{section}
\newcommand{\Z}{\mathbb Z}
\newcommand{\R}{\mathbb R}
\newcommand{\N}{\mathbb N}
\newcommand{\Aut}{\operatorname{Aut}}
\newcommand{\cG}{\mathcal{G}}
\newcommand{\cE}{\mathcal{E}}
\newcommand{\cM}{\mathcal{M}}
\renewcommand{\hat}{\widehat}
\newcommand{\Res}{\operatorname{Res}}
\newcommand{\mbar}{{\overline m}}
\begin{document}

\maketitle

\begin{abstract}
  We give lower bounds for the electrical resistance between vertices in the Schreier graphs of the action of the linear (degree 1) and quadratic (degree 2) mother groups on the orbit of the zero ray.
  These bounds, combined with results of \cite{JNS} show that every quadratic activity automaton group is amenable.
  The resistance bounds use an apparently new ``weighted'' version of the Nash-Williams criterion which may be of independent interest.
\end{abstract}

\section{Introduction}

Automaton groups are a rich family of groups, with a simple definition which exhibit rich behaviour.
They include many groups with interesting properties, including
the Grigorchuk group of intermediate growth,
the Basilica group, the Hanoi towers group, lamplighter groups, and many others.
Automaton groups are certain subgroups of the automorphism group $\Aut(T_m)$ of the rooted infinite $m$-ary tree for some $m$.
For any vertex $v\in T_m$, and automorphism $g$, there is an induced action of $g$ on the sub-tree above $v$.
This action is called the \textbf{section} of $g$ at $v$, denoted $g_v$.
An action maps a sub-tree to another sub-tree, but since a sub-tree is isomorphic to the whole tree, a section can be viewed naturally as an automorphism of the whole tree.
Automaton groups are finitely generated sub-groups of $\Aut(T_m)$ with generators for which all sections are also among the generating set
(see \cite{Nek05} for detailed definitions).

The \textbf{activity} of $g\in\Aut(T_m)$ at level $n$, denoted $a_n(g)$, is the number of $v$ in level $n$ of the tree such that $g_v$ is not the identity.
Automaton groups have the property that for any $g$ the activity sequence $a_n(g)$ grows either polynomially or exponentially.
An automaton group $\Gamma$ is said to have \textbf{degree $d$}, if every $g\in\Gamma$ has activity $a_n(g) = O(n^d)$.
Activity of $g$ was introduced by Sidki \cite{Sidki00} as a measure of the complexity of the action of $g$, and the degree of an automaton group as a measure of the complexity of the group.

There exist exponential activity automaton groups that are isomorphic to the free group \cite{GlasnerMozes05,Vorobets07}.
However, one expects polynomial activity automaton groups to be smaller.
In particular, in contrast with most examples of finitely generated non-amenable groups, \cite{Sidki00} showed that polynomial activity automaton groups have no free subgroups.
This prompted \cite{Sidki04} to ask the following natural question:

\begin{question}
  Are all polynomial activity automaton groups amenable?
\end{question}

This was answered affirmatively for degree $0$ in \cite{BKN} and for degree $1$ in \cite{AAV}.
These results were also reproved in \cite{JNS} (see the discussion below).
Our main result resolves Sidki's question for degree $2$:

\begin{thm}\label{T:main}
  Every automaton group of degree $2$ is amenable.
\end{thm}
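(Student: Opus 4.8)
The plan is to reduce amenability of an arbitrary degree-$2$ automaton group to a single recurrence statement, and then to establish that recurrence by bounding effective resistances from below. First I would invoke the universality of the quadratic mother group $\cM_2$: every degree-$2$ automaton group embeds into $\cM_2$ (this is precisely what makes it the mother group), and since subgroups of amenable groups are amenable, it suffices to prove that $\cM_2$ itself is amenable. This concentrates all of the work on one explicit group whose action on the rooted tree is completely understood.

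Next I would apply the criterion of \cite{JNS}. For a group acting on a rooted tree, that work reduces amenability to recurrence of the natural random walk on the Schreier graph of the action on the orbit $O$ of the zero ray $\bar 0$, with conductances read off from a symmetric generating measure. By the standard correspondence between reversible Markov chains and electrical networks, this walk is recurrent if and only if the effective resistance $R_{\mathrm{eff}}(\bar 0 \to \infty)$ in the network is infinite. The problem is thereby translated into a purely combinatorial–geometric question about one explicit weighted graph, and it is exactly such resistance lower bounds that I aim to supply.

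The technical heart is proving $R_{\mathrm{eff}}(\bar 0 \to \infty) = \infty$ for the quadratic Schreier graph. I would begin by describing $O$ and its adjacency structure explicitly, namely how each generator moves a ray, and isolate a radial coordinate measuring distance from $\bar 0$. The classical Nash–Williams inequality lower-bounds the resistance by $\sum_k \bigl(\sum_{e \in \Pi_k} c_e\bigr)^{-1}$ over a sequence of pairwise disjoint edge-cutsets $\Pi_k$ separating $\bar 0$ from infinity. For the quadratic graph the cutsets one naturally finds carry total conductance that grows with the radius, so the unweighted series converges and the criterion is too weak. The new ingredient is a weighted version of Nash–Williams, allowing cutsets to overlap and be counted with weights; I would prove it as an abstract electrical-network inequality and then apply it with weights and cutsets tuned to the quadratic geometry so that the resulting weighted series diverges. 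The linear mother group serves both as a warm-up and as a sub-structure sitting inside the quadratic picture, and its easier resistance bound feeds into the quadratic estimate.

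I expect this third step to be the main obstacle. In the linear case the Schreier graph is essentially one-dimensional and plain Nash–Williams already forces recurrence; quadratic activity makes the graph branch enough that the naive cutset conductances defeat the unweighted bound. The crux is to design the weighting, in effect distributing a resistance budget across overlapping cutsets so as to recover the recurrent one-dimensional behaviour hidden inside a more complex graph, and to control the combinatorics of the generator action on $O$ precisely enough to make the weighted sum provably diverge.
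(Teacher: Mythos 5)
Your proposal follows essentially the same route as the paper: embed every degree-2 automaton group in the quadratic mother group, invoke the recurrence-implies-amenability criterion of \cite{JNS}, and prove recurrence of the mother group's Schreier graph on the orbit of the zero ray by a weighted Nash--Williams bound with overlapping cutsets adapted to a linear (radial) coordinate. The only, immaterial, difference is bookkeeping: the paper transfers \emph{recurrence} down to the arbitrary group's Schreier graph via monotonicity of resistance under passing to subgraphs and then applies \cite{JNS} to that group, whereas you apply \cite{JNS} once to the mother group and transfer \emph{amenability} down via heredity under subgroups.
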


For degrees $0$ and $1$, the proofs of \cite{BKN,AAV} proceed as follows.
First, for each degree $d$ and $m$, a certain specific automaton group acting on $T_m$, called the \textbf{mother group} is constructed.
It is then shown that every automaton group $\Gamma$ of degree $d$ is isomorphic to a subgroup of the mother group of degree $d$ for some $m'$.
Next, it is proved that the mother groups of degree $0$ and $1$ are Liouville with respect to a carefully chosen random walk on them.
Since the Liouville property implies amenability, and amenability is inherited by subgroups, this implies amenability of all bounded or linear activity automaton groups.

It is shown in \cite{AV} that for $d\ge 3$ the mother groups are not Liouville.\footnote{Except for the case $d=3$ and $m=2$ which remains open.}
Thus the method of \cite{BKN} cannot be extended to degree $d>2$.
This raises the natural question of the Liouville property for degree $d=2$.

\begin{conj}[\cite{AAV}]
  The mother groups of degree $2$ are Liouville w.r.t. some (or even every) random walk on them.
  Moreover, the same holds for every automaton group of degree $2$.
\end{conj}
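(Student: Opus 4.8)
The plan is to attack the conjecture through the entropy criterion of Avez and Kaimanovich--Vershik. Fix a symmetric, generating probability measure $\mu$ of finite entropy on the degree-$2$ mother group $\cM$ (supported on the generators, say). Since vanishing of the asymptotic entropy $h(\mu)=\lim_n H(\mu^{*n})/n$ is equivalent to triviality of the Poisson boundary, and hence to the Liouville property, it suffices to show $h(\mu)=0$. I would aim first at the ``some random walk'' form of the conjecture for $\cM$; the ``every random walk'' strengthening requires the estimates below to be uniform in $\mu$ and is postponed.

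Second, I would exploit the wreath-recursive (self-similar) structure to model $\cM$ as a lamplighter-type group over the orbital Schreier graph $\Lambda$ of the zero ray, as in the degree $0$ and $1$ treatments. Decomposing each trajectory element through its root-level sections, the walk projects to a walk on $\Lambda$ (the ``base''), while the non-identity sections accumulated along the way play the role of a ``lamp configuration.'' The entropy then splits, up to lower-order terms, into a base contribution $H_{\mathrm{base}}(n)$ recording the position process on $\Lambda$, and a lamp contribution $H_{\mathrm{lamp}}(n)$ recording the section data. Because $\Lambda$ has polynomial, hence subexponential, growth, the base walk carries zero entropy, and $H_{\mathrm{base}}(n)=o(n)$ follows from standard amenable-base estimates.

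Third, I would try to bound the lamp contribution by the range $R_n$ of the base walk. The key analytic input is the resistance lower bound established earlier via the weighted Nash--Williams criterion: divergence of the effective resistance $R_{\mathrm{eff}}(o \leftrightarrow \infty)$ on $\Lambda$ yields recurrence of the base walk, and recurrence forces $\mathbb{E}[R_n]=o(n)$ through the standard escape-probability identity $\mathbb{E}[R_n]/n \to p_{\mathrm{esc}}=0$. Were each visited site of $\Lambda$ to contribute a bounded amount of information, exactly as a finite lamp does in a classical lamplighter, one would obtain $H_{\mathrm{lamp}}(n)\le C\,\mathbb{E}[R_n]=o(n)$, and hence $h(\mu)=0$.

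Finally, the main obstacle is precisely that the ``lamps'' here are sections rather than finite symbols, and for degree $2$ their complexity need not stay bounded per site. Controlling $H_{\mathrm{lamp}}(n)$ requires showing that the accumulated section data is compressible to $o(n)$ bits, which is the borderline phenomenon separating the degrees: the degree-$2$ activity bound $a_\ell(g)=O(\ell^2)$ must be balanced against the slow, recurrence-driven descent of the walk into the tree, so that the polynomially growing per-level information is exactly killed by the logarithmic escape rate. Making this balance quantitative --- rather than merely extracting recurrence, which already suffices for the weaker amenability statement of \Cref{T:main} --- is the crux, and is presumably why the conjecture remains open; one could alternatively try to identify $\partial T_m$ with its harmonic measure as the full Poisson boundary via Kaimanovich's strip criterion, but this meets the same difficulty. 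To reach ``every automaton group of degree $2$'' one cannot invoke subgroup heredity, since the Liouville property is not inherited by subgroups; instead the same entropy argument must be run directly on each degree-$2$ group, using that it too acts with degree-$2$ activity on a Schreier graph obeying the present resistance bounds.
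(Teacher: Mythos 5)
This statement is not a theorem of the paper: it is an open conjecture, quoted from \cite{AAV}, and the paper pointedly does \emph{not} prove it. The authors explain that the entropy/Liouville route used in degrees $0$ and $1$ has so far resisted extension to $d=2$, and they sidestep it entirely: they prove only the weaker amenability statement (\cref{T:main}) by establishing recurrence of the Schreier graph (\cref{T:d2rec}, via the resistance bound of \cref{T:resistance} and the weighted Nash--Williams inequality) and then invoking \cite[Theorem 5.2]{JNS}. So there is no paper proof to match your proposal against, and any purported proof of the conjecture would be new mathematics.

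Your proposal, to its credit, is honest about this, but as a proof it has a genuine and fatal gap exactly where you flag it: the bound $H_{\mathrm{lamp}}(n)\le C\,\mathbb{E}[R_n]$ requires each visited site to carry a bounded amount of information, and for degree $2$ the sections are not finite lamps --- the activity bound $a_\ell(g)=O(\ell^2)$ lets the per-site section data grow without any a priori compression to $o(n)$ bits. Nothing in the present paper supplies that compression; recurrence of the base (which the resistance bounds do give, and which drives your $\mathbb{E}[R_n]=o(n)$ step via Kesten--Spitzer--Whitman) is strictly weaker than the entropy control you need, which is precisely why the authors could prove amenability but not the Liouville property. Two further soft spots: the ``lamplighter over $\Lambda$'' model is only heuristic, since the mother group is not a wreath product and the entropy does not literally split into base and lamp terms (the degree $0$ and $1$ proofs in \cite{BKN,AAV} work instead by careful induction on the self-similar structure); and for the ``every automaton group of degree $2$'' clause, rerunning the argument on an arbitrary such group requires resistance bounds for its own Schreier graph --- the paper's bounds are for the mother group, and while resistances only increase in subgraphs (good enough for recurrence), the finer entropy estimates would not transfer so automatically. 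Your final paragraph correctly identifies all of this as the crux; but identifying the crux is not closing it, so the proposal establishes nothing beyond what \cref{T:main} already gives.
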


The Liouville property of the mother groups is established in the papers above by showing that a certain random walk on the group has sublinear entropy.
By results of Kaimanovich and Vershik \cite{KV} sublinear entropy growth is equivalent to having the Liouville property (w.r.t. this random walk).
In a sense, entropy bounds can be thought of quantitative versions of the Liouville property.
One should note that -- while amenability is inherited by subgroups -- it is not known whether the Liouville property passes onto subgroups.
Consequently, the results of \cite{BKN,AAV} do not imply that all automata groups of degrees $0$ or $1$ are Liouville, nor that the mother groups are Liouville w.r.t. other generating sets.

The fact that degree $0$ automata groups are Liouville w.r.t. any measure on them was proved in \cite{AAMBV} by giving explicit entropy bounds for random walks on these groups.
These entropy bounds come from resistance lower bounds in the Schreier graphs associated to the action of the automata group on a ray of the tree.
To get such lower bounds it is enough to attain lower bounds on the resistance for the Schreier graphs of the mother groups, since resistance can only increase when going into subgraphs.
Thus resistance estimates on the Schreier graph of the mother groups imply entropy estimates and the Liouville property for bounded automata.
We believe that a similar approach can be used to show that automata groups of degree $1$ also have the Liouville property for any measure.
For higher degree automata groups the situation is different.
In \cite{AV} it was shown that the Schreier graphs for degree $3$ and up mother groups are transient.
This was used to show (as noted above) that these groups do not have the Liouville property.

Upper and lower bounds for resistances in the Schreier graphs of the mother groups were given in \cite{AV} and \cite{AAMBV}.
These bounds are tight for degree $0$ mother groups, were enough to deduce transience for degree $3$ and up mother groups.
For degrees $1$ and $2$ there are significant gaps between the upper and lower bounds on resistances.
In particular, these bounds were not enough to deduce recurrence of the Schreier graphs for degree $2$ mother groups.

\medskip
Since the Liouville property is harder to establish for $d=2$ and false for $d>2$, new methods are needed for further progress on Sidki's conjecture.
In \cite{JNS}, Juschenko, Nekrashevych, and de la Salle proved that (under some general conditions), if the action of a group $G$ on a set $X$ is significant enough, and the Schreier graph of the action is recurrent, then the group is amenable.
In the context of polynomial activity automaton groups,
they consider the action of the group on the orbit of the $0$-ray of the tree, and show that if the Schreier graph of this action is recurrent, then the group is amenable.
This yielded a second proof of the amenability of degree $0$ and $1$ automata groups that does not pass via the Liouville property.
\cref{T:main} is a corollary of the following result, together with \cite[Theorem 5.2]{JNS}.

\begin{thm}\label{T:d2rec}
  For any degree 2 automaton group, the Schreier graph of its action on a ray of $T_m$ is recurrent.
\end{thm}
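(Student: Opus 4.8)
The plan is to reduce the statement to a resistance lower bound for the \emph{mother group} and then prove recurrence by a weighted form of the Nash--Williams criterion. As recalled above, every degree-$2$ automaton group is isomorphic to a subgroup of a degree-$2$ mother group $\cM$ acting on some $T_{m'}$; since the mother group is built to contain all the relevant local actions, the embedding carries generators into the generating set of $\cM$. Consequently the Schreier graph of $\Gamma$ on the orbit of $0^\infty$ is a subgraph of that of $\cM$, obtained by deleting vertices and edges, so by Rayleigh monotonicity its effective resistance is at least as large. Thus it suffices to prove that the Schreier graph $G$ of the mother group $\cM$ on the orbit of $0^\infty$ is recurrent, i.e. that $R_{\mathrm{eff}}(o\to\infty)=\infty$ for a base vertex $o$.

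First I would set up the recursive description of $G$ coming from the self-similarity of $T_{m'}$: writing rays as $x_1x_2\cdots$, the finite Schreier graphs $G_n$ on the orbit of $0^n$ are assembled from $m'$ nested copies of $G_{n-1}$ glued along an interface determined by the finitely many active generators. This yields a natural exhaustion $B_1\subset B_2\subset\cdots$ of $G$ together with interface edge-sets $\Pi_n$, each separating $o$ from infinity, whose cardinalities are governed by the activity and therefore grow like $n^2$. At this point the classical Nash--Williams bound $R_{\mathrm{eff}}\ge\sum_n |\Pi_n|^{-1}$ is \emph{insufficient}: since $|\Pi_n|\asymp n^2$ the series converges, giving only a finite lower bound. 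This is precisely the reason, indicated in the introduction, that earlier resistance estimates \cite{AV,AAMBV} did not even yield recurrence in degree $2$, and why a new tool is needed.

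The key step is a weighted Nash--Williams inequality allowing \emph{overlapping} cutsets. For any cutsets $\Pi_1,\Pi_2,\dots$ separating $o$ from infinity and any weights $w_n\ge 0$, applying Cauchy--Schwarz to a unit flow $\theta$ in the Thomson/Dirichlet principle (using that $\sum_{e\in\Pi_n}|\theta(e)|\ge 1$) gives
\[
  R_{\mathrm{eff}}(o\to\infty)\ \ge\ \frac{\bigl(\sum_n w_n\bigr)^2}{\displaystyle\sum_{e}\Bigl(\sum_{n:\,e\in\Pi_n} w_n\Bigr)^2\big/c(e)}.
\]
When the $\Pi_n$ are disjoint and $c\equiv 1$ this reduces to the classical criterion, but the essential gain is that an edge may lie in many cutsets. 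Using the nested recursive structure I would take overlapping cutsets recording that a unit flow from $o$ to infinity must cross a given level-$k$ interface not once but on the order of $k$ times, as the flow repeatedly enters and exits the nested subcopies, so that the effective multiplicity in the denominator is tamed while the numerator still grows. Choosing the weights $w_n$ to balance these two effects should make the right-hand side diverge and hence prove $R_{\mathrm{eff}}(o\to\infty)=\infty$; equivalently one runs the bound on the finite exhaustion and lets $B_N\uparrow G$.

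The main obstacle is the borderline nature of degree $2$. Since the mother groups are transient for $d\ge 3$ \cite{AV} and the existing two-sided estimates \cite{AV,AAMBV} leave a genuine gap in degree $2$, the target divergence is only marginal, so the choice of cutsets and weights must be sharp: there is no slack to absorb a lossy inequality. Concretely, the difficulty lies in extracting the exact combinatorial description of the interface edges $\Pi_n$ and the number of times the optimal flow is forced to traverse each of them, since it is exactly this ``series'' count, fed into the weighted criterion, that converts the convergent unweighted sum into a divergent one.
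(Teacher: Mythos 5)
Your overall strategy coincides with the paper's: embed the degree-2 automaton group into a quadratic mother group, use monotonicity of resistance, and prove recurrence of the mother group's Schreier graph by a Nash--Williams-type bound with \emph{overlapping} cutsets (the paper's \cref{P:WNW}). But there is a genuine gap: your argument stops exactly where the paper's work begins. You never define the cutsets, never substantiate the claimed crossing structure, and never carry out the divergence computation --- you write that a suitable choice of cutsets and weights ``should'' make the bound diverge, and your closing paragraph concedes that extracting the combinatorial description is the unsolved difficulty. That combinatorial description \emph{is} the proof. In the paper it occupies \cref{sec:graphs,sec:cutsets}: one first builds a linear order $\hat\pi$ on the vertices of the Schreier graph, defines one cutset $\bar S_a$ for each integer $a$ (essentially all edges straddling position $a$, slightly enlarged to make the bookkeeping clean), assigns it the geometric weight $\beta^a$, and proves counting lemmas identifying exactly which cutsets contain a given edge of each type and how many edges of each type lie in a given cutset. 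These yield the split conductance $C_a \asymp \beta^{-a}(\log_2 a)\prod_{i<\log_2 a}(1+\beta_i)$ in degree 2 (\cref{L:C_a_2}), and summing $C_a^{-1}$ over the $2^k$ cutsets at scale $a\in[2^k,2^{k+1})$ gives $\asymp 1/k$ per scale, hence the divergent $\sum_k 1/k$ of \cref{T:resistance}. Since, as you yourself note, the divergence is only logarithmic, there is no slack whatsoever to absorb an unverified step; a plan at this level of generality cannot be patched into a proof without doing essentially all of the paper's work.

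Two further points. First, your heuristic that ``a unit flow must cross a level-$k$ interface on the order of $k$ times'' is both unproven and not how the gain is actually obtained: the paper never counts repeated crossings of a flow; instead it uses exponentially many ($2^k$) overlapping cutsets per scale with geometrically decaying weights $\beta^a$, and allocates each edge's resistance among the cutsets containing it in proportion to those weights. Second, your Cauchy--Schwarz inequality has the conductance on the wrong side: the valid bound is
\[
  \mathrm{Res}(o,\infty)\ \ge\ \frac{\bigl(\sum_n w_n\bigr)^2}{\displaystyle\sum_{e}\,c(e)\Bigl(\sum_{n:\,e\in\Pi_n} w_n\Bigr)^2},
\]
obtained by writing $|\theta(e)|A_e=\bigl(|\theta(e)|/\sqrt{c(e)}\bigr)\bigl(\sqrt{c(e)}\,A_e\bigr)$ in Thomson's principle. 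Your version (dividing by $c(e)$) agrees with this only when $c\equiv 1$, which is why your sanity check against the classical criterion did not catch it; it matters here because after projecting $\cG_{d,\mbar}$ to the binary graph $\cG_{d,2}$ the edges carry non-unit conductances $\prod_{i:x_i\neq 0}(m_i-1)$, and with $c(e)>1$ your written inequality is false as stated.
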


\medskip

The bulk of the work in this paper is actually in a more general context of groups of automorphisms of a spherically symmetric tree (i.e. a tree where every vertex at distance $k$ from the root has the same number $m_k$ of children).
Such groups were used in \cite{Bri13,AV,Amir} to construct groups where the random walk has varied speeds and entropy growth.

\subsection{Mother groups on spherically symmetric trees}

Let $\mbar = (m_0,m_1,\dots)$ be some infinite \emph{bounded} sequence with $m_i\in\{2,3,\dots\}$.
We consider the spherically symmetric tree
$T_\mbar$ defined as follows.  At level $0$ there is a single vertex
$\emptyset$ (the root).  Each vertex at level $i$ has $m_i$ children at
level $i$, so that the size of level $\ell$ is $\prod_{i<\ell} m_i$.  A
vertex at level $\ell$ is naturally encoded by a word $x_\ell\dots x_1 x_0$
where $x_i\in[m_i] = \{0,1,\dots,m_i-1\}$.  Since we will later have a
group acting on $T_\mbar$ on the right, it is more useful to write the digits with $x_0$ on the right.
The set of \textbf{ends of the tree} $T_\mbar$, denoted $\cE$, are the infinite rays in $T_\mbar$, and are naturally encoded by infinite sequences (which we again write with $x_0$ on the right) $\dots x_2 x_1 x_0$, with $x_i\in[m_i]$.
The subset of ends with only finitely many nonzero digits is denoted $\cE_0$.

We remark that the case of $\mbar$ constant is already new and of interest.
This case is of particular significance since the corresponding groups (as defined below) are the automaton groups discussed above.
The confused reader may well restrict to the case where $\mbar$ is the constant sequence, and the tree is the $m$-ary tree, without losing much.

We consider \textbf{automorphisms} of the rooted tree (which preserve the root $\emptyset$, and hence each level).
(For some sequences $\mbar$ such as $(3,2,2,2,\dots)$ there are automorphisms which do not preserve the root, but we do not consider such automorphisms in this work.)
An automorphism acts naturally on the set of ends of the tree, and is
determined by this action.
A bijection $f$ of the set of ends with itself corresponds to an automorphism of the tree if for every $i$, the $i$th
digit of $f(x)$ is determined by $x_i,\dots,x_0$.

Towards defining our groups, we need notation for the locations of non-zero digits in an end of the tree.
For a (finite or infinite) word $x$, let $\ell_{-1}(x) = -1$, and inductively let
\[
  \ell_t(x) = \inf \{n>\ell_{t-1}(x) : x_n \neq 0\}.
\]
For some fixed degree $d$, the \textbf{mother group} of degree $d$, denoted $\cM_{d,\mbar}$, is a subgroup of the automorphism group with the following set of generators.
Each generator is specified by a degree $t\in\{-1,0,\dots,d\}$, and a sequence $(\sigma_i)_{i\leq\max{\mbar}}$, where $\sigma_i$ is a permutation in the symmetric group $S_{i}$ for each $i$.
For an end $x$, let $k = 1+\ell_t(x)$.  The generator applies $\sigma_{m_k}$ to the digit $x_k$, and leaves all other digits unchanged.
If $k=\infty$ (which happens for some ends in $\cE_0$), then $x$ is a fixed point of the generator.

As an example, suppose $x=\dots201300010020$.
Then $\ell_{0}=1$,  $\ell_1=4$,  $\ell_2=8$, etc.
If (for whatever value of $m_9$), $\sigma_{m_9}$ maps 1 to 0 then the corresponding generator with $t=2$ will map this $x$ to $\dots200300010020$.

Note that the subset $\cE_0$ of ends is preserved by all actions of generators of the mother group, and hence by actions of the group.
Moreover, $\cE_0$ is dense in the set of all ends, and so the action on $\cE_0$ determines an automorphism of the tree.
Finally, we remark that the mother groups act transitively on $\cE_0$.
(This is not hard but requires some observation and is also the basis of some mechanical puzzles such as the \emph{Chinese rings}.)
For $d=1,2$ these groups are referred to as the \textbf{linear} and \textbf{quadratic} mother groups respectively.

Recall that the \textbf{Schreier graph} for the action of a group $G$ generated by $S$ on a set $A$ is the graph with vertex set $A$ and an edge $(x,x\sigma)$ for any $x\in A$ and $\sigma\in S$.
Our main object of study in this work is the Schreier graph $\cG_{d,\mbar}$ for the action of $\cM_{d,\mbar}$ on $\cE_0$.
It is not hard to see that $\cG_{d,\mbar}$ is connected, and is the connected component of the $0$ ray in the Schreier graph for the action on $\cE$.
We shall also consider the Schreier graphs for the action on level $n$ of the tree, which will be denoted $\cG_{d,\mbar,n}$.

\subsection{Results for mother groups}

\begin{thm}\label{T:recurrent}
  For $d\leq 2$ and any bounded sequence $\mbar$, the Schreier graph
  $\cG_{d,\mbar}$ is recurrent.
\end{thm}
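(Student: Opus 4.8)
The plan is to establish recurrence of $\cG_{d,\mbar}$ for $d\le 2$ by proving lower bounds on electrical resistance, using the ``weighted Nash-Williams'' criterion advertised in the abstract. Recall that a graph is recurrent if and only if the effective resistance from a fixed vertex to infinity is infinite, and the classical Nash-Williams criterion bounds this resistance from below by $\sum_n 1/C_n$, where $C_n$ is the sum of conductances of edges in a cutset separating the origin from infinity. Since $\cG_{d,\mbar}$ is the increasing union of the finite Schreier graphs $\cG_{d,\mbar,n}$ (the action on level $n$), the natural strategy is to filter the graph by a sequence of nested cutsets — most naturally the cutsets corresponding to the level-$n$ structure, i.e. grouping ends of $\cE_0$ by the position of their highest nonzero digit — and to show that the resistance across successive cutsets accumulates to infinity.

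\medskip
First I would set up the combinatorial geometry of $\cG_{d,\mbar}$ explicitly: each vertex of $\cE_0$ is a finite-support sequence $\dots x_2 x_1 x_0$, and each generator (specified by a degree $t$ and permutations $(\sigma_i)$) acts by locating the $(t{+}1)$-st nonzero digit and permuting the digit just above it. I would identify, for each $n$, a natural cutset $\Pi_n$ separating the zero ray from the ends with large support, and compute or bound the number of edges crossing $\Pi_n$ together with their effective conductances after series/parallel reductions. The key quantitative input is how the activity degree $d$ controls the growth of these cutset sizes: for $d=0$ the graph is essentially one-dimensional and resistance grows linearly; for $d=1$ and especially $d=2$ the branching is faster, so the plain Nash-Williams sum $\sum 1/C_n$ may converge and fail to certify recurrence. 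This is precisely why a \emph{weighted} version of the criterion is needed.

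\medskip
The crux of the argument — and **the hard part will be** — is the weighted Nash-Williams estimate. Rather than using a single flat cutset at each scale, I would assign weights to the edges of a cutset (equivalently, choose a unit flow and bound resistance via $\inf_\theta \sum_e r_e \theta_e^2$ through a cleverly tilted flow, or dually optimize the choice of cutset function), so that the effective resistance lower bound becomes a weighted sum that still diverges when the unweighted one would not. Concretely, I expect to route the flow so that it spreads unevenly across the branches dictated by the positions $\ell_t(x)$ of nonzero digits, charging more resistance to the ``sparse support'' directions that dominate the geometry near the zero ray. Establishing that the optimal such weighting yields a divergent series — and verifying this exactly at the threshold $d=2$ — is where the degree-$2$ borderline makes the estimate delicate; the degree-$3$ transience result of \cite{AV} shows the method must be sharp and cannot have slack to spare.

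\medskip
Finally, I would assemble the pieces: the weighted Nash-Williams bound gives $\Res(\emptyset \leftrightarrow \infty) = \infty$ in each $\cG_{d,\mbar}$ for $d\le 2$, hence recurrence. Since the bound $m_i\le\max\mbar$ is uniform, the estimates should be robust to the spherically-symmetric generalization, so the constant-$\mbar$ case (the genuine automaton groups of \cref{T:d2rec}) follows as a special case, and recurrence of $\cG_{d,\mbar}$ then feeds into \cite[Theorem 5.2]{JNS} to yield amenability in \cref{T:main}.
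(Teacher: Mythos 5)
You have correctly identified the paper's high-level strategy (lower bounds on the resistance to infinity via a weighted Nash--Williams criterion), and your diagnosis of why the classical criterion fails is right: with the level cutsets $\Pi_n$ you propose, the total conductance grows like $n^d$ (up to constants, since $\mbar$ is bounded), so $\sum_n 1/C(\Pi_n)$ diverges for $d\le 1$ but converges for $d=2$. The genuine gap is at the step you yourself label the crux, and your proposed fix cannot work as stated. Weighting the edges \emph{within} one cutset per scale gains nothing: for a single cutset the Nash--Williams bound is $1/C(\Pi_n)$ however resistance is allocated (full allocation is optimal), and the same holds for any disjoint family, so no reweighting of the $\Pi_n$ can turn a convergent series into a divergent one. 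Your alternative suggestion, exhibiting a cleverly tilted unit flow of small energy $\sum_e r_e\theta_e^2$, points in the wrong direction: by Thomson's principle a flow gives an \emph{upper} bound on resistance, i.e.\ it is the tool for proving transience, not recurrence. The idea that actually makes the argument work, and which is absent from your proposal, is to use \emph{non-disjoint} cutsets, exponentially many per scale: the paper (\cref{sec:cutsets}) projects vertices to $\N$ via $\hat\pi$ and takes, for \emph{every} integer $a$, the cutset $S_a$ of edges $(x,y)$ with $\hat x<\hat a\le \hat y$ (suitably enlarged to $\bar S_a$). The weights in \cref{P:WNW} are attached to cutsets, not edges: $\bar S_a$ gets weight $\beta^a$, and an edge lying in several cutsets splits its resistance among them in proportion to these weights. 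The payoff is exactly the missing factor of $k$: the combinatorial lemmas \cref{L:e_in_Sa}--\cref{L:C_a_2} show that for $d=2$ each split conductance satisfies $C_a\asymp \beta^{-a}(\log_2 a)\prod_{i<\log_2 a}(1+\beta_i)$, so the $2^k$ cutsets at scale $k$ together contribute $\sum_{a=2^k}^{2^{k+1}-1}C_a^{-1}\asymp 1/k$ rather than the $1/k^2$ of the single level cutset, and $\sum_k 1/k$ diverges, giving \cref{T:resistance} and hence recurrence.

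In short, everything your argument would need in order to be a proof --- the linear order $\hat\pi$, the family $\{\bar S_a\}$, the choice of weights $\beta^a$, and the edge-in-cutset counting that makes the divergence computation possible --- is deferred in the paragraph beginning ``the crux of the argument,'' which is precisely where all the content of the paper's proof lies; what remains around it is a restatement of the framework already announced in the abstract, with one of its two suggested implementations (the flow one) aimed at the wrong side of the inequality.
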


We expect other components of the Schreier graph to have a very similar geometry to the component on $\cE_0$, and in particular to also be recurrent.
This is not needed for the application to amenability of the mother groups, and the combinatorial ingredients in the analysis of the geometry of the graph are easier for $\cE_0$, and so we restrict our attention to that component.

In the case $d=0$, the Schreier graph has been analyzed in \cite{AV}.
When $d=0$ and $m_i \equiv 2$, the graph is simply the half-line $\N$.
(Other components of the Schreier graph in this case are isomorphic to $\Z$.)
For general $\mbar$ it is easily seen to be recurrent as it contains infinitely many cut-sets of bounded size.
Resistances in $\cG_{0,\mbar}$ are studied in \cite{AV}.

For $d=1,2$ recurrence of $\cG_{d,\mbar}$ is a direct consequence of the quantitative estimates in the following theorem, which require some additional notation.
In \cref{sec:graphs} we describe an explicit projection $\hat\pi:\cG_{d,\mbar} \to \N$ with the following properties: The only vertex with $\hat\pi(v)=0$ is the $0$ ray, and each $n\in\N$ has a finite non-empty pre-image.
In the case $m_i\equiv 2$, $\hat\pi$ is a bijection.

\begin{thm}
  \label{T:resistance}
  Fix a bounded sequence $\mbar$.
  There exists a constant $C$, depending only on $\sup\mbar$ such the following holds.
  For any $0<s<t$ the resistance in $\cG_{d,\mbar}$ satisfies
  \[
    \Res\big(\hat\pi^{-1}[0,2^s), \hat\pi^{-1}[2^t,\infty)\big) \geq
    \begin{cases}
      C(t-s) & \text{for } d=1, \\
      C(\log t-\log s) & \text{for } d=2. \\
    \end{cases}
  \]
\end{thm}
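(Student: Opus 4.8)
The plan is to deduce \cref{T:resistance} from a weighted form of the Nash--Williams criterion applied to the level sets of the projection $\hat\pi$. First I would record the criterion. Suppose $A,B$ are disjoint vertex sets in a network with edge conductances $c_e$, let $\{\Pi_j\}$ be a family of edge cutsets each separating $A$ from $B$, and let $\lambda_j\ge 0$ be weights with $\sum_{j:\,e\in\Pi_j}\lambda_j\le 1$ for every edge $e$. Then
\[
  \Res(A,B)\ \ge\ \sum_j \frac{\lambda_j}{\sum_{e\in\Pi_j} c_e}.
\]
This follows from the energy characterization of resistance: for a unit flow $\theta$ from $A$ to $B$ one has $\sum_{e\in\Pi_j}\theta_e^2/c_e\ge(\sum_{e\in\Pi_j}c_e)^{-1}$ on each cutset by Cauchy--Schwarz, and the admissibility of the weights lets one add these contributions over overlapping cutsets without overcounting the energy. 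The weights are what make the statement useful here: the $\Pi_j$ need not be disjoint, and in $\cG_{d,\mbar}$ a single generator edge can join two vertices whose $\hat\pi$-values differ by a bounded multiplicative factor, so it crosses many level sets at once and the classical disjoint criterion is too lossy.

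Next I would take $A=\hat\pi^{-1}[0,2^s)$ and $B=\hat\pi^{-1}[2^t,\infty)$, and for each integer $j$ with $2^s\le j\le 2^t$ let $\Pi_j$ be the set of edges with one endpoint in $\hat\pi^{-1}[0,j)$ and the other in $\hat\pi^{-1}[j,\infty)$; each $\Pi_j$ separates $A$ from $B$. For the weights I would try $\lambda_j=c_0/j$ with a small constant $c_0=c_0(\sup\mbar)$. Admissibility reduces to a geometric fact about the generators: an edge $e$ with endpoint values $a<b$ lies in $\Pi_j$ exactly for $a<j\le b$, so $\sum_{j:\,e\in\Pi_j}\lambda_j\le c_0\log(b/a)$, and this is $\le 1$ once $b/a$ is shown to be bounded in terms of $\sup\mbar$. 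Establishing this bound on the $\hat\pi$-length of edges is the first thing I would extract from the explicit description of $\hat\pi$ in \cref{sec:graphs}.

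With admissible weights the bound becomes $\Res(A,B)\ge c_0\sum_{2^s\le j\le 2^t}1/(j\,|\Pi_j|)$, and everything reduces to controlling the sizes of the crossing cutsets. The heart of the argument, and the step I expect to be hardest, is the combinatorial claim that $|\Pi_j|=O\big((\log j)^{d-1}\big)$ with constant depending only on $\sup\mbar$; that is, $|\Pi_j|=O(1)$ for $d=1$ and $|\Pi_j|=O(\log j)$ for $d=2$. This is where the degree enters: a crossing edge comes from a generator flipping a high-order digit, which forces the nonzero digits of the vertex into a restricted pattern, and I expect the number of such patterns below a given level to grow polynomially of degree $d-1$ in the scale $\log j$, reflecting the degree-$d$ activity of the generator. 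Granting this, grouping the sum into dyadic blocks $j\in[2^k,2^{k+1})$ on which $\log j\asymp k$ gives
\[
  \sum_{2^s\le j\le 2^t}\frac{1}{j\,|\Pi_j|}\ \gtrsim\ \sum_{k=s}^{t}\frac{1}{k^{d-1}},
\]
which is $\asymp(t-s)$ for $d=1$ and $\asymp\log t-\log s$ for $d=2$, exactly the two cases of \cref{T:resistance}; \cref{T:recurrent} then follows by sending $t\to\infty$. (The same expression for $d\ge 3$ gives a convergent sum, matching the known transience in higher degree, which is a reassuring consistency check.)

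The main obstacle, as indicated, is the cutset bound $|\Pi_j|=O((\log j)^{d-1})$. The subtle points are that the count must be carried out directly on $\cG_{d,\mbar}$ for a general bounded, possibly non-constant $\mbar$, where $\hat\pi$ is no longer a bijection and each level carries a genuine finite fibre that has to be controlled uniformly in $\sup\mbar$; and that one must show the restricted digit patterns producing crossing edges are really counted by the degree-$(d-1)$ polynomial, rather than a larger power. I expect this to need the recursive, self-similar description of $\cG_{d,\mbar}$ under the shift $\mbar\mapsto(m_1,m_2,\dots)$ together with the explicit form of $\hat\pi$, and to be precisely where the hypothesis $d\le 2$ is genuinely used.
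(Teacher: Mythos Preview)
Your weighted Nash--Williams inequality is correct, and the level-set cutsets $\Pi_j$ are the same ones the paper uses. But there is a genuine gap, and it occurs exactly at the step you flag as the heart of the matter: the claimed bound $|\Pi_j|=O\big((\log j)^{d-1}\big)$ is off by a factor of $\log j$. Already in the simplest case $\mbar\equiv 2$, the cutset $\Pi_j$ contains, for each $\ell_0<\log_2 j$, one type-$1$ edge whose lowest nonzero digit sits at position $\ell_0$ (this is worked out in \cref{L:C_a_1}), so $|\Pi_j|\asymp\log j$ for $d=1$; for $d=2$ there is additionally, for each $\ell_1<\log_2 j$, a choice of $\ell_0<\ell_1$, giving $|\Pi_j|\asymp(\log j)^2$ (\cref{L:C_a_2}). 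With the correct sizes your bound becomes
\[
  \Res(A,B)\ \gtrsim\ \sum_{j=2^s}^{2^t}\frac{1}{j(\log j)^{d}}
  \ \asymp\ \sum_{k=s}^{t}\frac{1}{k^{d}},
\]
which for $d=1$ yields only $\log t-\log s$ and for $d=2$ is convergent, so it fails to prove recurrence in the quadratic case. One cannot repair this by a cleverer choice of $\lambda_j$: the admissibility constraint coming from the long type-$1$ edges (which cross an interval of length $\asymp 2^{\ell_0}$, for $\ell_0$ up to $\log_2 j$) already forces $\lambda_j\lesssim 1/j$ on average, and that is what you used.

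The missing idea is that your version of weighted Nash--Williams, which bounds $\Res$ by $\sum_j\lambda_j/\bigl(\sum_{e\in\Pi_j}c_e\bigr)$ with the \emph{raw} cutset conductance in the denominator, is strictly weaker than the resistance-allocation form in \cref{P:WNW}. In the latter one splits each $R_e$ among the cutsets containing $e$ and uses the split conductance $C(\Pi_j)=\sum_{e\in\Pi_j}R_{e,j}^{-1}$. The paper allocates proportionally to cutset weights, so a type-$1$ edge with parameter $\ell_0$ (which lies in $\asymp 2^{\ell_0}$ cutsets) contributes $\asymp 2^{\ell_0}$ to each $C(\Pi_j)$ rather than $1$. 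Summing over $\ell_0<\log_2 j$ gives $C(\Pi_j)\asymp j$ instead of $\log j$, and then $\sum_j 1/C(\Pi_j)\asymp t-s$; the $d=2$ case acquires exactly one extra factor of $\log j$. In short, it is precisely the interaction between ``edge is in many cutsets'' and ``edge's resistance is spread thin'' that buys back the logarithm your version loses, and this cannot be captured by weights on cutsets alone.
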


\begin{remark}
Note that by monotonicity, if $a<2^s<2^t<b$ then
\[  \Res\big(\hat\pi^{-1}[0,a), \hat\pi^{-1}[b,\infty)\big) \geq \Res\big(\hat\pi^{-1}[0,2^s), \hat\pi^{-1}[2^t,\infty)\big). \]
Thus \cref{T:resistance} implies a similar bound for such resistances
(i.e. $\log(b/a)$ and $\log\log(b/a)$ in the two cases respectively) as long as $b\ge 4a$.
For $b$ close to $a$, the result might fail.
Indeed, if $b=a+1$ then the resistance can be of order $a^{-\delta}$ for some $\delta$ depending on $\bar{m}$, which can be large if $\bar{m}$ has large entries.
\end{remark}


As mentioned in the introduction, any automaton group of degree $d$ is conjugate to a sub-group of the mother group of the same degree $d$, possibly on a larger alphabet \cite{AAV}.
A similar statement holds also for general spherically symmetric trees \cite{Bri09}.
Since resistances in subgraphs are larger than resistances in a graph, we get the following corollary, which in turn implies amenability of the groups.

\begin{corollary}
  The Schreier graph for the natural action of any automaton group of degree at most 2 on the ends of the regular tree has a recurrent component.
\end{corollary}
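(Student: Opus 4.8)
Since the Schreier graph of the action on the set of ends $\cE$ splits into connected components indexed by the $\Gamma$-orbits, it suffices to exhibit a single recurrent component. The plan is to identify the component of the $0$-ray with a subgraph of a mother-group Schreier graph and then invoke \cref{T:recurrent} together with Rayleigh monotonicity of effective resistance.

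First I would invoke the embedding theorem of \cite{AAV} (and \cite{Bri09} in the spherically symmetric setting): an automaton group $\Gamma$ of degree $d\le 2$ acting on $T_m$ is conjugate, via a tree isomorphism onto some $T_{\mbar}$ with $\mbar$ the constant sequence equal to a suitable $m'$, to a subgroup $\Gamma'$ of the mother group $\cM_{d,\mbar}$, and this conjugacy carries the chosen generating set of $\Gamma$ into the generating set of $\cM_{d,\mbar}$. A tree isomorphism induces an isomorphism of the associated Schreier graphs, so it is enough to find a recurrent component for the action of $\Gamma'$.

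Next I would locate the right component. Let $A$ be the $\Gamma'$-orbit of the $0$-ray; this is a single connected component of the Schreier graph of $\Gamma'$. Because $\cE_0$ is preserved by every generator of $\cM_{d,\mbar}$, it is preserved by $\Gamma'\le\cM_{d,\mbar}$, and since the $0$-ray lies in $\cE_0$ we get $A\subseteq\cE_0$. As $\cE_0$ is exactly the vertex set of $\cG_{d,\mbar}$ and each generator of $\Gamma'$ is among the generators of $\cM_{d,\mbar}$, every edge of the Schreier graph of $\Gamma'$ meeting $A$ is already an edge of $\cG_{d,\mbar}$ (and stays inside $A$ by $\Gamma'$-invariance). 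Hence the component $A$ is a subgraph of $\cG_{d,\mbar}$, obtained by deleting the vertices of $\cE_0\setminus A$ together with all edges coming from generators of $\cM_{d,\mbar}$ that do not lie in $\Gamma'$.

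Finally I would conclude by monotonicity. By \cref{T:recurrent} the effective resistance from the $0$-ray to infinity in $\cG_{d,\mbar}$ is infinite; deleting vertices and edges can only raise effective resistances, so the corresponding resistance computed inside $A$ is also infinite, and $A$ is recurrent. The one step that is not purely formal is the strengthened input from \cite{AAV,Bri09}: one needs the conjugacy to be realized by a tree isomorphism sending generators to generators, so that the Schreier graph of $\Gamma$ appears literally as a subgraph of $\cG_{d,\mbar}$, rather than merely as a graph whose edges are paths in $\cG_{d,\mbar}$ (which would not interact cleanly with resistance). Granting this, the corollary is immediate from \cref{T:recurrent} and Rayleigh monotonicity.
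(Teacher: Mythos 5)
Your proposal is correct and follows essentially the same route as the paper: invoke the embedding of \cite{AAV} (and \cite{Bri09} for spherically symmetric trees) to realize the automaton group inside the mother group, identify the component of the $0$-ray as sitting inside $\cG_{d,\mbar}$, and conclude recurrence from \cref{T:recurrent} via Rayleigh monotonicity. The one subtlety you flag --- that the embedding sends generators to elements (words in the mother group's generators) rather than to single generators, so the component is not literally a subgraph --- is glossed over by the paper as well, and is repaired without any strengthened input by the standard rough-embedding argument: each generator of the embedded group moves every point of $\cE_0$ a uniformly bounded distance in $\cG_{d,\mbar}$, and recurrence of a bounded-degree graph passes to any graph that roughly embeds into it.
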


\paragraph{Structure of the paper.}
In \cref{sec:graphs} we give a combinatorial description of the Schreier graphs of the mother groups.
In \cref{sec:NW} we give a generalization of the Nash-Williams resistance bound for collections of non-disjoint cutsets.
Unlike the Nash-Williams bound, the generalized version always achieves the actual resistance if the correct cutsets and weights are used.
While this generalization is not difficult, we have not found a reference for it, and it is of some independent interest.
Finally, in \cref{sec:cutsets} we define a collection of cutsets in $\cG_{d,\mbar}$, assign them weights and deduce \cref{T:resistance}.

\section{Combinatorial description of the graphs}
\label{sec:graphs}

In this section we give a more explicit description of the Schreier graphs $\cG_{d,\mbar}$ and $\cG_{d,\mbar,n}$.
Recall that a vertex $x$ of $\cG_{d,\mbar}$ is an end in $\cE_0$ of $T_\mbar$, and so is naturally described by a sequence $(x_i)_{i\geq0}$ where $x_i\in[m_i]$ such that eventually $x_i=0$.
For $\cG_{d,\mbar,n}$ the vertices are finite sequences $x_{n-1}\dots x_1 x_0$.

We write $x\sim y$ to denote that $x$,$y$ are connected by an edge.
Edges are of $d+2$ different \textbf{types}, denoted by $t\in\{-1,0,\dots,d\}$, corresponding to the types of the generator associated with the edge.
In all cases, an edge connects vertices $x$ and $y$ which differ only in a single coordinate (though not all such pairs are connected).
We denote that coordinate by $k=k(x,y)$, so that $x_k\neq y_k$, and $x_i=y_i$ for all $i\neq k$.
For such a pair $x,y$, we have that
\begin{itemize}[nosep]
\item $(x,y)$ is an edge of type $-1$ if $k=0$.
\item $(x,y)$ is an edge of type $t\ge0$ if $k>0$, and $x_{k-1}=y_{k-1}\neq 0$,
  and moreover there are precisely $t$ indices $i<k-1$ for which $x_i\neq 0$.
\item Otherwise, $(x,y)$ is not an edge.
\end{itemize}
For example, $x=0340020$ is connected to $y=0140020$ by an edge of type $1$
(here $k=5$), since $x_1$ and $x_4$ are non-zero.
The same $x$ is not adjacent to $z=0320020$, since $x_{k-1} = 0$.
See \cref{fig:smallgraphs} for some small examples.

\begin{figure}
  \centering
  \includegraphics[width=.8\textwidth]{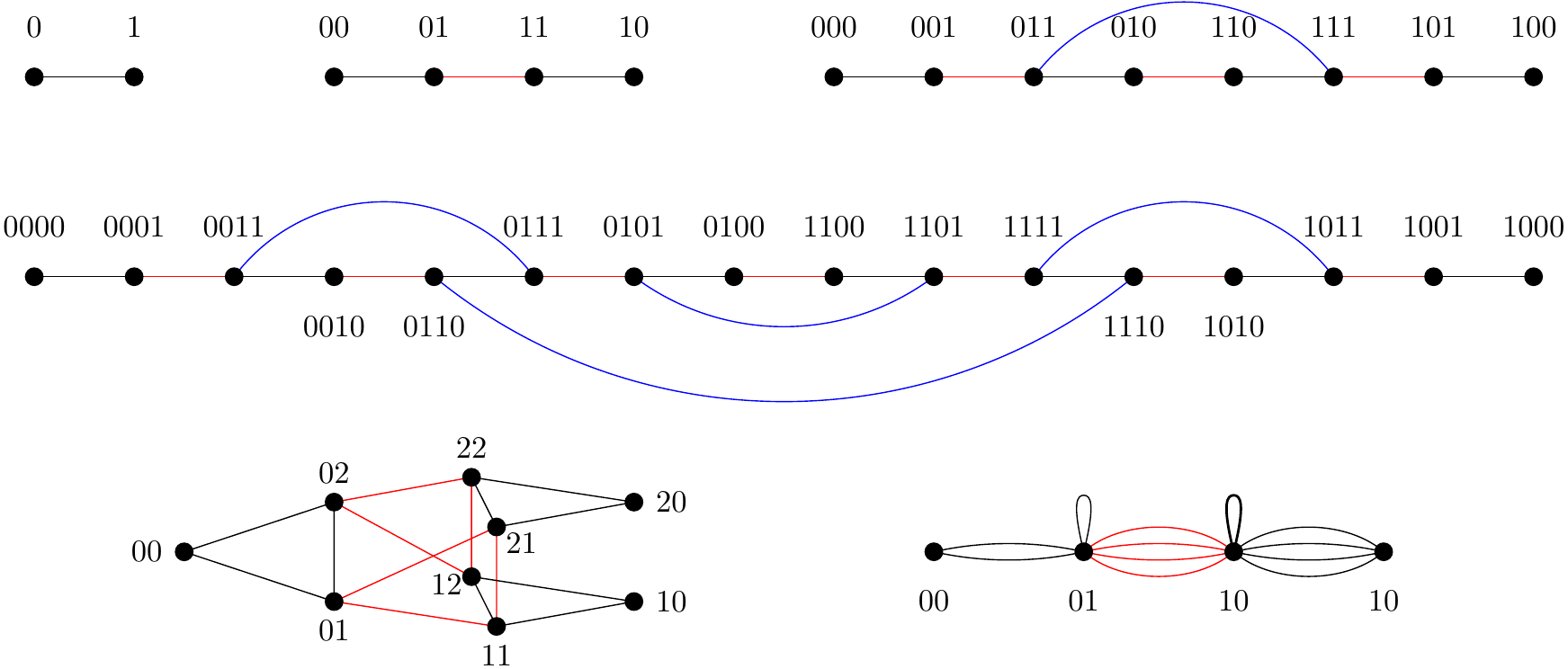}
  \caption{Top and middle rows: The graphs $\cG_{d,2,n}$ for $d=1$ and $n=1,2,3,4$.
    Bottom row: The graph $\cG_{1,3,2}$ and its quotient $\cG_{1,2,2}$ with multiple edges and self loops.
    Edge colour denotes its type: Black for $t=-1$, red for $t=0$ and blue for $t=1$.
    The bold loop on the bottom right has multiplicity 4, not all of the same type, but of course has no effect on resistances.
    Vertices are laid out according to the linear order $\hat x$ from left to right.}
  \label{fig:smallgraphs}
\end{figure}

Clearly the graphs $\cG_{d,\mbar,n}$ are monotone in $d,m_i,n$: reducing
any $m_i$ restricts to a subset of the vertices, while reducing $d$ to $d'$ removes all edges of type $t>d'$.
Extending a vertex of $\cG_{d,\mbar,n}$ by $0$s
gives a vertex of $\cG_{d,\mbar,n'}$ for any $n'>n$.  Extending by
infinitely many
$0$s gives a vertex of  $\cG_{d,\mbar}$.  This gives a canonical embedding of
$\cG_{d,\mbar,n}$ in the graphs for larger $n$ and in $\cG_{d,\mbar}$.


Clearly for each $i$, the graphs $\cG_{d,\mbar}$ and $\cG_{d,\mbar,n}$ are also invariant to permuting the letters $\{1,\dots,m_i-1\}$.
Consider two vertices $x,y$ to be \textbf{equivalent} if they have the same non-zero coordinates, i.e.\ $\{ i : x_i \neq 0\} = \{i :y_i \neq0\}$.
From each such equivalence class we take as representative the vertex in $\{0,1\}^n$.
The \textbf{hamming weight} of a vertex $x$, denoted $|x|$ is the number of non-zero coordinates.
The equivalence class of $x$ has $\prod_{i : x_i\neq 0} (m_i-1)$ vertices.
The projection from $\cG_{d,\mbar,n}$ to $\{0,1\}^n$ is denoted by $\pi$.
In the limit $n\to\infty$, this projection extends to a projection from $\cG_{d,\mbar}$ to $\oplus_{i\in \N} \{0,1\}$, namely the set of $\{0,1\}$ sequences with finitely many ones.
Since these are the vertices of $\cG_{d,2,n}$ (or $\cG_{d,2}$), we can see $\pi$ as a map from $\cG_{d,\mbar,n}$ to $\cG_{d,2,n}$, which preserves much of the graph structure.

The resistances we shall consider are between sets that are themselves invariant to such permutations, and thus can be studied by looking at resistances on the quotient graph.
Many edges become self-loops under this projection, and do not affect the resistance.
If $(x,y)$ is an edge where $x_k=0$ and $y_k\neq 0$, then the edge $x,y$ is projected onto an edge of $\cG_{d,2,n}$, so $\pi$ maps $\cG_{d,\mbar,n}$ to $\cG_{d,2,n}$ with self-loops.
Each edge of $\cG_{d,2,n}$ can have multiple preimages under $\pi$.
The number of preimages of an edge $(x,y)$ is $\prod_{i: y_i\neq 0} (m_i-1)$ (assuming $x_k=0$ and $y_k\neq 0$).
We will therefore consider the graph $\cG_{d,2,n}$ where edges have conductance given by this multiplicity.
We remark that $\prod_{i: x_i\neq 0} (m_i-1)$ and $\prod_{i: y_i\neq 0} (m_i-1)$ differ by a bounded factor of $m_j-1$ for some $j$, so up to constant factors either can be used for the conductance of the edge.
The same holds for the projection from $\cG_{d,\mbar}$ to $\cG_{d,2}$.
We will work below primarily with the graphs $\cG_{d,2,n}$ and $\cG_{d,2}$ with edge conductances coming from these multiple preimages.

The graphs in the case $d=0$ are particularly simple.
Each vertex of $\cG_{0,2,n}$ is incident to one edge of type $-1$ and one edge of type $0$, except for the root $o = 00\dots0$ and one other vertex $o' = 10\dots0$ which have degree 1.
It is not hard to verify that the graph $\cG_{0,2,n}$ is a path of length $2^n-1$ from $o$ to $o'$.
Since increasing $d$ does not remove any edges, this path is contained in $\cG_{d,2,n}$ for any $d$.
It will be useful to
keep track of the position of a vertex $x$ along this path, which shall be
denoted $\hat x\in[0,2^n-1)$.


Given a finite binary string $x=x_nx_{n-1}\dots x_0$ we define its \textbf{linear position} $\hat{x}_n \hat{x}_{n-1}\dots \hat{x}_0$ by $\hat{x}_k = \sum_{i=k}^{n} x_k \pmod{2}$.
Note that this map is a bijection from $\{0,1\}^n$ to itself, and the inverse transform is given by $x_k=\hat{x}_k + \hat{x}_{k+1} \pmod{2}$.
This extends to infinite sequences $x\in \bigoplus_{i\in\N} \{0,1\}$, since
the infinite sum contains finitely many ones.
We also define the projection $\hat\pi$ on $\cG_{d,\mbar}$ and $\cG_{d,\mbar,n}$ by composing this transformation with the projection $\pi$, namely $\hat\pi(x) = \widehat{\pi(x)}$.
For example if $x=0340020$, then $\pi(x)=0110010$ and $\hat\pi(x) = 0100011$.

We remark that $\bigoplus_{i\in \N} \{0,1\}$ is naturally in bijection with $\N$, where any integer corresponds to its binary expansion.
This gives a natural order on $\cG_{d,2}$ and a partial order on $\cG_{d,\mbar}$, where $x<y$ if $\hat\pi(x)<\hat\pi(y)$ as integers.
The root is the unique minimal vertex, mapped to $\hat{o} = 0$.

Recall that for a vertex $x$ we denote by $\ell_i(x)$ the position of the $(i+1)$-th non-zero digit in $i$ from the right, and we let $\ell_{-1}(x)=-1$.
Thus for an edge $e=(x,y)$ of type $t$ we have $\ell_i(x)=\ell_i(y)$ for all $i\leq t$.
We denote the part of $x$ (or $y$) strictly to the left of position $k$ by $z(x,y)$.
When $x$ and $y$ are fixed, we shorten notation and denote the above simply by $k$, $\ell_i$ and $z$.
We shall make use of the following description of edges in terms of the linear order on vertices:



\begin{remark}\label{C:0-1edges}
  Edges of type $-1$ and $0$ always connect adjacent points in the linear order.
  That is, $0$ is connected to $1$ by an edge of type $-1$, and every other $x\in \{0,1\}^*$ $\hat{x}$ is connected to $\hat{x}\pm 1$ by edges of types $-1$ and $0$.
  The edge types alternate along the resulting path.
  See \cref{fig:smallgraphs}.
\end{remark}

\section{Weighted Nash-Williams}
\label{sec:NW}

In this section we give a generalization of the classical Nash-Williams bound on resistances in electrical networks, which applies for collections of not-necessarily disjoint cutsets.
This generalization, which is also of some independent interest,  will be used to give lower bounds on resistances in the Schreier graphs of the mother groups.


We assume here the reader has basic familiarity with the theory of electrical networks, and refer the reader to e.g. \cite{LP,DS} for detailed background.
We recall the notations we use below.
An electrical network is a graph $G=(V,E)$ with edge weights or conductances $C_e\in\R_+$.
The resistance of an edge is denoted $R_e = C_e^{-1}$.
An unweighted graph is seen as a network with $C_e\equiv 1$.
We denote the resulting effective resistance between vertices $a,b$ by $\Res(a,b)$.
This is extended to resistance between sets $A,B$, denoted $\Res(A,B)$.

Recall the classical Nash-Williams inequality:
In any graph $G$ with vertices $a,b$, if $\{S_i\}_{i\in I}$ are disjoint edge cutsets (i.e.\ $S_i$ separates $a$ from $b$), then $\Res(a,b) \geq \sum |S_i|^{-1}$.
This extends in the natural way to the resistance between sets $A,B$, as well as to electrical networks, where $|S_i|$ is replaces by the total conductance of $S_i$.
In general networks, there is no collection of disjoint cutsets for which this bound achieves the actual resistance.
There is not even a bound on how far from $\Res(A,B)$ the
optimal collection of cutsets might be.  However, it turns out that there is
a weighted version of Nash-Williams that can achieve the resistance on any
graph, which we describe below.

Let $\{S_i\}_{i\in I}$ be some collection of cutsets (not necessarily disjoint).
A \textbf{resistance allocation} is an assignment, where each edge splits its resistance between the cutsets containing it.
More explicitly, for each edge $e$ and $i\in I$, we have partial resistances $R_{e,i}\geq 0$ which satisfy $\sum_i R_{e,i} \leq R_e$ so that $R_{e,i}=0$ if $e\not\in S_i$.
Define the \textbf{split conductance} of $S_i$ by $C(S_i) = \sum_{S_i} R_{e,i}^{-1}$.

The following is the generalization of Nash-Williams to non-disjoint cutsets.
While it is fairly simple to prove, we are not aware of a reference for it in the literature.

\begin{prop}\label{P:WNW}
  With the above notations, for any collection of cutsets and resistance allocation we have $\Res(A,B) \geq \sum_i C(S_i)^{-1}$.
  Moreover, in any finite graph, $\Res(A,B)$ is the supremum of $\sum_i C(S_i)^{-1}$ over all resistance allocations on some collection of cutsets.
\end{prop}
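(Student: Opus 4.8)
The plan is to treat the two assertions separately: the inequality by a Cauchy--Schwarz argument generalizing the classical Nash--Williams proof, and the tightness by exhibiting an explicit optimal allocation built from the harmonic potential.

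For the lower bound I would fix an arbitrary unit flow $\theta$ from $A$ to $B$. Since each $S_i$ is a cutset, the signed flow across it equals the total current, and after accounting for signs this forces $\sum_{e\in S_i}|\theta_e|\ge 1$. Applying Cauchy--Schwarz with the factorization $|\theta_e| = \big(\sqrt{R_{e,i}}\,|\theta_e|\big)\cdot R_{e,i}^{-1/2}$ gives $1 \le \big(\sum_{e\in S_i} R_{e,i}\theta_e^2\big)\,C(S_i)$, that is $C(S_i)^{-1}\le \sum_{e\in S_i} R_{e,i}\theta_e^2$ (the case where some $R_{e,i}=0$ on $S_i$ is trivial, as then $C(S_i)=\infty$). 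Summing over $i$, interchanging the nonnegative sums, and using the allocation constraint $\sum_i R_{e,i}\le R_e$ yields $\sum_i C(S_i)^{-1}\le \sum_e R_e\theta_e^2$. As this holds for every unit flow, Thomson's principle gives $\sum_i C(S_i)^{-1}\le \Res(A,B)$, valid on any network.

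For the tightness on a finite graph I would show the bound is attained. Let $v$ be the harmonic potential carrying unit current, with $v\equiv \Res(A,B)$ on $A$ and $v\equiv 0$ on $B$, and let $i_e = C_e\,|v(x)-v(y)|$ be the current through $e=(x,y)$. List the distinct vertex potentials $0=u_0<u_1<\dots<u_M=\Res(A,B)$. For each band take $S_k = \{e : e \text{ has one endpoint with } v\le u_{k-1} \text{ and the other with } v\ge u_k\}$, with allocation $R_{e,k} = (u_k-u_{k-1})/i_e$ for $e\in S_k$ and $R_{e,k}=0$ otherwise. Three points must be checked: (i) each $S_k$ is a genuine cutset, since $\{w : v(w)\ge u_k\}$ contains $A$, avoids $B$, and has boundary exactly $S_k$; (ii) the allocation is admissible, because the endpoints of $e$ are themselves vertices, so $|v(x)-v(y)|$ is a sum of consecutive full bands and the telescoping $\sum_{k\,:\,e\in S_k}(u_k-u_{k-1}) = |v(x)-v(y)|$ gives $\sum_k R_{e,k}=R_e$; and (iii) since all current crosses each band-cutset in the same direction, $\sum_{e\in S_k} i_e = 1$, whence $C(S_k) = (u_k-u_{k-1})^{-1}$. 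Combining, $\sum_k C(S_k)^{-1} = \sum_k (u_k-u_{k-1}) = \Res(A,B)$, so the supremum is attained.

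The inequalities involved are elementary; the only real content is the choice of cutsets. The main point to get right is step (iii): one must verify that across a band-cutset the current has no cancellation, i.e. every edge of $S_k$ carries current from the high-potential side to the low-potential side, so that the total signed flow $1$ equals $\sum_{e\in S_k} i_e$. This is precisely the equality case of the Cauchy--Schwarz step from the lower bound, which is what makes the harmonic band construction optimal.
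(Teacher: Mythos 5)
Your proof is correct, and the two halves compare differently with the paper. For the inequality, you take a genuinely different route: you reprove the Nash--Williams bound from scratch in the split-resistance setting, applying Cauchy--Schwarz on each cutset (from $1 \le \sum_{e\in S_i}|\theta_e|$ via the factorization $|\theta_e| = \sqrt{R_{e,i}}\,|\theta_e|\cdot R_{e,i}^{-1/2}$) and then optimizing over unit flows via Thomson's principle. The paper instead reduces to the classical theorem: each edge $e$ is replaced by edges in series with resistances $\{R_{e,i}\}$, which only decreases effective resistances since $\sum_i R_{e,i}\le R_e$, and in the subdivided network the pieces allocated to $S_i$ form genuinely \emph{disjoint} cutsets, so classical Nash--Williams finishes. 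Your version is self-contained and handles infinite collections of cutsets in one stroke (nonnegative sums can be interchanged), while the paper passes to finite subfamilies; the paper's reduction is shorter if one takes classical Nash--Williams as a black box. For the tightness half, your construction is essentially identical to the paper's: the cutsets are the edge boundaries of superlevel sets of the equilibrium potential, the allocation is proportional to the potential gaps $u_k-u_{k-1}$, and the key point in both is that all current crosses each band cutset in the same direction, so $C(S_k)^{-1}$ telescopes to the full potential difference. The only difference is normalization: you fix unit current (potential difference $\Res(A,B)$), the paper fixes unit voltage (current $1/\Res(A,B)$). Your checks (i)--(iii), in particular that admissibility holds with equality by telescoping, are exactly the substance of the paper's verification.
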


A particular way of allocating resistances is to assign each cutset $S_i$ a weight $K_i\ge 0$ and allocate resistances in proportion to these weights.
Formally this means to set $R_{e,i} = \frac{R_e K_i}{\sum_{S_j\ni e} K_j}$.
Plugging this in yields the following bound:

\begin{corollary}
  For any collection $\{S_i\}_{i\in I}$ of cutsets between $A,B$, and any non-negative weights $(K_i)$, we have
  \[
    \Res(A,B) \geq \sum_i \left(
      \sum_{e\in S_i} \frac{\sum_{S_j\ni e} K_j}{R_e K_i} \right)^{-1}.
  \]
\end{corollary}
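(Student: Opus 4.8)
The plan is to derive the corollary as an immediate specialization of \cref{P:WNW}: I would simply feed the proportional allocation into the general bound and simplify. Concretely, for each edge $e$ and each $i$ with $e\in S_i$ set
\[
  R_{e,i} = \frac{R_e K_i}{\sum_{S_j\ni e} K_j},
\]
and $R_{e,i}=0$ when $e\notin S_i$. The first thing to check is that this is a valid resistance allocation in the sense of \cref{P:WNW}. The condition $R_{e,i}=0$ for $e\notin S_i$ holds by definition, so the only real point is the budget constraint $\sum_i R_{e,i}\le R_e$. Summing the displayed formula over those $i$ with $e\in S_i$, the common factor $R_e/\sum_{S_j\ni e}K_j$ comes out and the remaining sum $\sum_{i:\,e\in S_i}K_i$ is exactly $\sum_{S_j\ni e}K_j$, so $\sum_i R_{e,i}=R_e$; in particular the required inequality holds, with equality.

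Next I would compute the split conductances. By definition,
\[
  C(S_i)=\sum_{e\in S_i} R_{e,i}^{-1}
  =\sum_{e\in S_i}\frac{\sum_{S_j\ni e}K_j}{R_e K_i},
\]
so that $\sum_i C(S_i)^{-1}$ is precisely the right-hand side of the claimed inequality. Invoking the first assertion of \cref{P:WNW} for this particular allocation then gives $\Res(A,B)\ge\sum_i C(S_i)^{-1}$, which is exactly the bound to be proved.

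The remaining care is purely bookkeeping. If some weights vanish, I would discard the cutsets with $K_i=0$: under the conventions $1/0=\infty$ and $1/\infty=0$ such a cutset contributes $0$ to both sides, so assuming $K_i>0$ for all $i$ costs nothing; then for every edge contained in at least one cutset the denominator $\sum_{S_j\ni e}K_j$ is strictly positive and the allocation above is well defined. There is essentially no analytic obstacle here—the entire substance of the result sits in \cref{P:WNW}, and the corollary is a one-line substitution together with the verification that the proportional split saturates the allocation budget.
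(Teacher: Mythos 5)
Your proposal is correct and matches the paper's own (implicit) argument exactly: the paper presents this corollary as an immediate consequence of \cref{P:WNW} by plugging in the proportional allocation $R_{e,i} = R_e K_i / \sum_{S_j\ni e} K_j$, which is precisely your substitution. Your additional checks (that the allocation saturates the budget $\sum_i R_{e,i}=R_e$ and that zero-weight cutsets contribute nothing to either side) are sound bookkeeping that the paper leaves to the reader.
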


\begin{remark}
  If the graph is infinite, the weighted cutset method still gives a lower bound on the resistances.
  If we consider the resistance from a vertex (or set) to infinity, $\Res(A,\infty)$ is a limit of the resistance to the complement of an arbitrary exhaustion $G_n$.
  It follows that $\Res(A,\infty)$ is again the supremum over weighted cutsets as above.
  In particular, a graph is recurrent if and only if there exist a collection of cutsets $S_i$ and resistance allocations $R_{e,i}$ such that $\sum C(S_i)^{-1} = \infty$.
  We omit further details.
\end{remark}

\begin{proof}[Proof of \cref{P:WNW}]
  Assume first that there are only finitely many cutsets $S_i$ in the collection.
  Given a resistance allocation, we construct a new network $G'$ from $G$, where each edge $e$ is replaced by several edges in series, with resistances
  $\{R_{e,i}\}_{i\in I}$.
  (The order of these edges in the series is arbitrary.)
  Since $\sum_i R_{e,i} \leq R_e$, effective resistances in $G'$ are all smaller than in $G$.
  In $G'$ we can construct a collection of disjoint cutset:
  For each $i$ take the edges of resistance $R_{e,i}$.
  The classical Nash-Williams applied to these disjoint cutsets gives the claimed bound.
  If there are infinitely many cutsets just note that any finite partial sum gives a finite resistance allocation, and thus gives a lower bound on $\Res(A,B)$.

  To see that some weighted cutsets achieve the resistance, we give an explicit construction.
  Suppose first that $G$ is finite.
  Consider the induced equilibrium voltage with $V=0$ on $A$ and $V=1$ on $B$, and let $f$ be the equilibrium flow from $a$ to $b$, so that $f(x,y) = \frac{V_y-V_x}{R_{xy}}$.
  Let $0 = a_0 < a_1 < \dots < a_m = 1$ be the different values taken by $V$.
  For each $i\le m$, let $U_i = \{x\in G : V_x < a_i\}$, so that $A\subset U_i$ and $B \subset U_i^c$.
  Define the cutsets $S_i$ of edges $x,y$ with $x\in U_i$ and $y\not\in U_i$.

  We use the weighted resistance allocation as defined above.
  Assign $S_i$ weight $K_i = a_i-a_{i-1}$, so that $\sum K_i = V_B-V_A=1$.
  For an edge $e=(x,y)$ with $V_x<V_y$ we have that
  \[ \sum_{j : e\in S_j} K_j = V_y-V_x, \]
  and therefore $R_{e,i}^{-1} = R_e^{-1} \frac{V_y-V_x}{K_i} = f(e)/K_i$.
  Since the flow is always in the direction of increasing voltage, the total flow across any cutset $S_i$ is exactly $1/\Res(A,B)$.
  Thus $C(S_i)^{-1} = K_i \Res(A,B)$.
  Summing over $i$ we get the claim.
\end{proof}

\section{Cutsets in $\cG$}
\label{sec:cutsets}


We now use the linear order on vertices of $\cG = \cG_{d,2,n}$ or $\cG_{d,2}$ to define a collection of cutsets.
We remind that we work here with the graphs resulting from projecting $\cG_{d,\mbar,n}$ so that edges have unequal conductances.
The conductance of an edge $(x,y)$ is either $\prod_{x_i=1} (m_i-1)$ or the corresponding product for $y$.
The two are equivalent up to a bounded multiplicative factor.

For $\hat a\in \N$ we let $S_a$ be the set of all edges $(x,y)$ with $\hat x < \hat a\leq \hat y$.
Note that these cutsets are not disjoint for $d>0$.
(If $d=0$, then $\cG$ is a path, and each of these cutsets is a single edge.)
As with $x$, each $\hat a$ is associated with a sequence $a\in\{0,1\}^*$.
Note that even for general sequences $m$ we take $a\in\{0,1\}^*$.

For our analysis, it will be convenient to enlarge these cutsets slightly.
Edges of type $-1$ and $0$ will not be added to the cutsets.
However, to some cutsets we will add an edge of type $1$ and possibly several edges of type $2$, as described below.
The enlarged cutsets will be denoted $\bar{S}_a$, and are defined formally after the proof of \cref{L:e_in_Sa}.

To a cutset $\bar S_a$ we associate weight $\beta^a := \prod_{i: \, a_i=1}\beta_i$, where $\beta_i=1/(m_i-1)$.
It will be convenient to extend this notation to general sequences by $\beta^x := \prod_i \beta_i^{x_i}$.
We also use below the notation $\beta^{-a} = (m-1)^a = 1/\beta^a$.
We then allocate the resistance $R_e$ of an edge $e$ between the cutsets in proportion to their weight,
i.e.\ for $e\in \bar S_a$ let
\[
  R_{e,a} = \frac{R_e \beta^a}{\sum_{a : e\in \bar S_a} \beta^a}.
\]

Our immediate goal is therefore to understand which of the cutsets $S_a$ include a given edge and which edges are included in any cutset.
The enlarged cutsets $\bar{S}_a$ will be defined so that $R_{e,a}$ is easier to analyse.

\begin{lemma}\label{L:e_in_Sa}
  Consider an edge $e=(x,y)$ of type $t$ with $\hat x < \hat y$.
  \begin{enumerate}[nosep]
  \item If $t\in\{-1,0\}$, then $e\in S_a$ if and only if $\{\hat x,\hat y\} = \{\hat{a}-1, \hat a\}$ as an unordered pair.
  \item If $t=1$ and $a \neq x$, then we have $e\in S_a$ if and only if
    $a_i=x_i=y_i$ for all $i>\ell_0$ except $i=k$.
  \item If $t=2$ and $e\in S_a$, then $a_i=x_i=y_i$ for all $i>\ell_1$ except $i=k$.
  \end{enumerate}
\end{lemma}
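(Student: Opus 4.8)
The whole lemma reduces to understanding how the linear position $x \mapsto \hat x$ reacts to flipping a single digit. The plan is to first record the following: flipping $x_k$ complements $\hat x_j$ for every $j \le k$ and leaves $\hat x_j$ unchanged for $j > k$ (immediate from $\hat x_j = \sum_{i \ge j} x_i \bmod 2$). Thus for an edge $e = (x,y)$ of type $t$, where $x$ and $y$ differ only at position $k = \ell_t(x) + 1$, the integers $\hat x$ and $\hat y$ agree in all bits above $k$ and are bitwise complementary in bits $0, \dots, k$. Writing $L_x$ (resp.\ $L_y, L_a$) for the number formed by bits $0, \dots, k$ of $\hat x$ (resp.\ $\hat y, \hat a$), this gives $L_x + L_y = 2^{k+1} - 1$, and $\hat x < \hat y$ forces $\hat x_k = 0$, hence $L_x < 2^k \le L_y$. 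Since $e \in S_a$ means exactly $\hat x < \hat a \le \hat y$, and $\hat x, \hat y$ share their high bits, I would show as the basic criterion that $e \in S_a$ iff (A) $\hat a$ agrees with $\hat x$ above bit $k$, and (B) $L_x < L_a \le L_y$. By the inversion $a_i = \hat a_i \oplus \hat a_{i+1}$, condition (A) is equivalent to $a_i = x_i\,(=y_i)$ for all $i > k$, which is the ``above $k$'' part of each claim. Part (1) then follows from \cref{C:0-1edges}: a type $-1$ or $0$ edge joins $\hat x$ to $\hat x + 1$, so $\hat x < \hat a \le \hat y$ forces $\hat a = \hat y = \hat x + 1$, i.e.\ $\{\hat x, \hat y\} = \{\hat a - 1, \hat a\}$.

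For parts (2) and (3) the remaining work is to compute the low bits of $\hat x$ from the type structure and to convert the interval condition (B) into digit equalities. The nonzero positions of $x$ below $k$ are exactly $\ell_0 < \dots < \ell_t = k - 1$, and since $\hat x_k = 0$ we get $\hat x_j \equiv \#\{s : \ell_s \ge j\} \pmod 2$ for $j \le k$. For $t = 1$ this yields $\hat x_j = 1$ precisely on $\ell_0 < j \le k-1$, so $L_x = 2^k - 2^{\ell_0 + 1}$ and $L_y = 2^k + 2^{\ell_0+1} - 1$; for $t = 2$ it yields the two-block pattern $\hat x_j = 1$ on $[0, \ell_0] \cup [\ell_1 + 1, k-1]$, so $L_x = (2^{\ell_0+1} - 1) + (2^k - 2^{\ell_1+1})$ and $L_y = 2^k + (2^{\ell_1+1} - 2^{\ell_0+1})$.

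The core step is that (B) pins down the middle digits of $\hat a$. In the type-$1$ case $L_a$ lies in an interval of radius $\approx 2^{\ell_0+1}$ about $2^k$, so either $L_a < 2^k$, forcing $\hat a_k = 0$ and $\hat a_j = 1$ for all $\ell_0 < j < k$, or $L_a \ge 2^k$, forcing $\hat a_k = 1$ and $\hat a_j = 0$ for all $\ell_0 < j < k$. In either case $a_i = \hat a_i \oplus \hat a_{i+1}$ gives $a_i = 0$ for $\ell_0 < i < k-1$ and $a_{k-1} = 1$, that is $a_i = x_i$ for $\ell_0 < i < k$; together with (A) this is exactly part (2), and the one excluded endpoint $L_a = L_x$ is precisely $a = x$, which is why the hypothesis $a \ne x$ appears. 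For type $2$ I would run the same argument one scale up: grouping bits $\ell_1 + 1, \dots, k$ into a single block, the block value of $L_y$ exceeds that of $L_x$ by exactly one, so (B) confines the block value of $L_a$ to two consecutive values, and both make $\hat a_j$ constant on $\ell_1 < j < k$; the same XOR inversion then gives $a_i = x_i$ for $\ell_1 < i < k$, which with (A) is the (necessary) assertion of part (3).

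I expect the delicate point to be the passage from the half-open interval condition (B) to digitwise statements, especially for $t = 2$, where $L_x$ and $L_y$ have a genuine two-block structure and one must rule out intermediate values of the middle block. The clean device is the blockwise decomposition $L_\bullet = B_\bullet\, 2^{\ell_1 + 1} + r_\bullet$ with $0 \le r_\bullet < 2^{\ell_1+1}$ and $B_y = B_x + 1$, after which the claim reduces to the elementary fact that any integer in $(B_x 2^{\ell_1+1} + r_x,\; (B_x+1)2^{\ell_1+1} + r_y]$ has block value $B_x$ or $B_x + 1$. Throughout I would track the single toggled bit $k$ and the interval endpoints carefully, since membership in $S_a$ is the half-open condition $\hat x < \hat a \le \hat y$, and it is exactly the two boundary cases $a = x$ and $a = y$ that separate the two-sided equivalence of part (2) from the one-directional statement of part (3).
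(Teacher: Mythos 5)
Your proposal is correct and takes essentially the same route as the paper's proof: both arguments describe the binary digit patterns of $\hat x$ and $\hat y$ (which agree above position $k$ and are complementary below it), reduce membership in $S_a$ to an interval condition on the low bits of $\hat a$, split on the value of $\hat a_k$ to pin down the digits of $\hat a$ in positions between $\ell_0$ (resp.\ $\ell_1$) and $k$, and convert back via $a_i = \hat a_i \oplus \hat a_{i+1}$; your block-value arithmetic for $t=2$ is just a formalized version of the paper's direct digit-pattern argument, and your handling of the endpoint $a=x$ matches the paper's. One peripheral remark in your last paragraph is inaccurate—part (3) is stated one-directionally not because of the boundary cases $a=x$, $a=y$, but because for $t=2$ the interval condition additionally constrains the digits of $\hat a$ in positions at most $\ell_1$, so sufficiency genuinely fails for non-boundary $a$ (see the example in \cref{F:cutset})—though this does not affect your proof, which only uses necessity.
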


\begin{figure}
  \centering
  \includegraphics[width=0.7\textwidth]{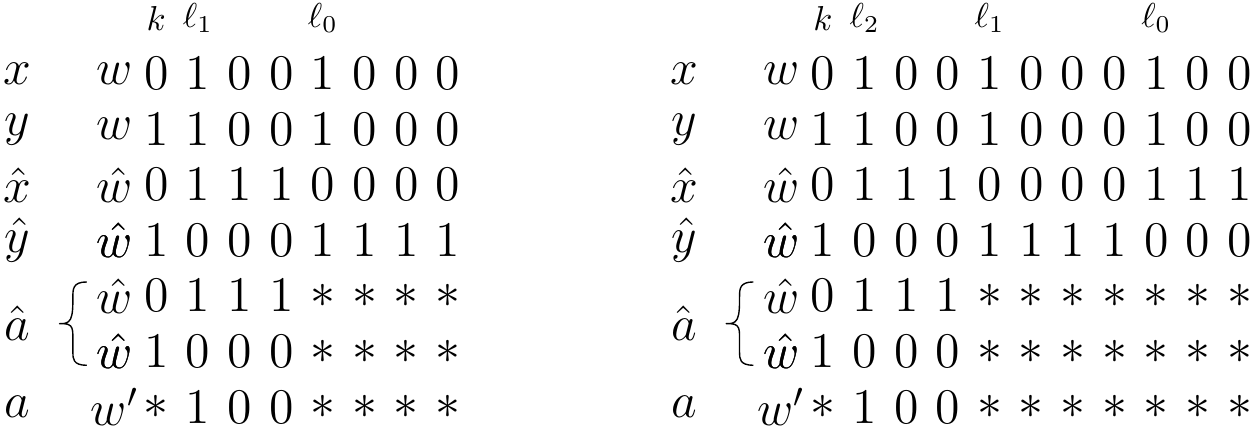}
  \caption{Examples for \cref{L:e_in_Sa}.
    Left: $(x,y)$ is a type 1 edge.
    Both begin with a common prefix $w$, and differ only in position $k$, with a one immediately afterwards.
    If $w$ has an even number of ones then $\hat x < \hat y$, otherwise it would be reversed.
    If $(x,y)\in S_a$ then $\hat x < \hat a \leq \hat y$, and so $\hat a$ must take one of two forms, depending on its $k$th digit.
    The $*$'s indicate digits that could take any value in $\{0,1\}$.
    However, in the first form of $\hat a$, if all $*$'s are $0$ then $\hat x=\hat a$, which is excluded.
    In either case, $a$ agrees with either $x$ or $y$ up to position $\ell_0$.
    Right: a type 2 edge.
    Here, $\hat x < \hat a \leq \hat y$ implies that $\hat a$ and $a$ have the form shown.
    However, even more cases are excluded, for example if $\hat a = \hat{w}10001111010$ then $\hat a>\hat y$ and $(x,y)\not\in S_a$.
    The enlarged cutsets $\bar{S}_a$ contain $(x,y)$ whenever $a$ has the form above, even if $\hat a \not\in(\hat x,\hat y]$.}
  \label{F:cutset}
\end{figure}

See \cref{F:cutset} for examples of the type 1 and type 2 cases.
Note that in the case $t=1$ we have $k=\ell_1+1 > \ell_0$.
In that case the condition on the digits of $a$,$x$,$y$ is satisfied when $a=x$ but $e\not\in S_a$ due to the strict inequality in the definition of $S_a$.
In the case $t=2$ we do not provide a sufficient criterion for $e\in S_a$ but only a necessary condition.
In the case of $t=2$ one could give a necessary and sufficient condition for $e$ to be in $S_a$, which would be more cumbersome and would not lead to a significant improvement in the estimates below.

\begin{proof}
  The cases $t=-1$ and $t=0$ follow immediately from \cref{C:0-1edges}.

  Let $e=(x,y)$ be a type-1 edge with $\hat{x}<\hat{y}$.
  Since $k=1+\ell_1$ is the unique index where $x_k\neq y_k$, we have that $x$ and $y$ have the following form (from left to right):
  They start with the same sequence of bits $w$, until position $k$.
  At position $k$ one of them is $0$ and the other is $1$.
  Which is $0$ depends on the parity of the number of $1$s in $w$.
  At position $\ell_1 = k-1$ both are $1$, and the rest of the bits are $0$ except for a single position $l_0$ where
  also $x_{\ell_0}=y_{\ell_0}=1$.
  See \cref{F:cutset}

  Therefore their linear order representations $\hat{x},\hat{y}$ have the following structure:
  Both begin (on the left) with $\hat{w}$ till position $k$.
  At position $k$, $\hat{x}=0$ and $\hat{y}=1$ (since we assumed $\hat{x}<\hat{y}$);
  This is followed in $\hat{x}$ by $\ell_1-\ell_0$ ones, and $\ell_0$ zeros.
  In $\hat{y}$, the final ones and zeros are reversed: there are $\ell_1-\ell_0$ zeros followed by $\ell_0$ ones.

  Since we assume $a \neq x$, then also $\hat a \neq \hat x$.
  Therefore the assumption $(x,y)\in S_a$ is equivalent to $\hat{x} \leq \hat{a} \leq \hat{y}$.
  Then $\hat{a}$ must agree with both $\hat{x}$ and $\hat{y}$ in all positions left of $k$.
  If $\hat{a}_k=0$ then $\hat{a} \leq \hat{y}$ must hold, and the condition $\hat{x} \leq \hat{a}$ is equivalent to the next $\ell_1-\ell_0$ digits being $1$ (and the final $\ell_0$ digits can be anything).
  Similarly, if $\hat{a}_k=1$ then $\hat{x} \leq \hat{a}$ must hold, and the condition $\hat{a} \leq \hat{y}$ is equivalent to the next $\ell_1-\ell_0$ digits all being $0$.

  Converting this description of $\hat{a}$ to $a$ yields the claim for the case $t=1$ (recall $a_i=\hat{a}_i+\hat{a_{i+1}}$ mod $2$).

  \medskip

  The case $t=2$ is similar.
  Let $e=(x,y)$ be a type-2 edge with $\hat{x}<\hat{y}$.
  Since $k=1+\ell_2$ is the unique index where $x_k\neq y_k$, we have that $x$ and $y$ have the following form (from left to right):
  They start with the same sequence $w$, until position $k$.
  At position $k$ one of them is $0$ and the other is $1$.
  Subsequently, their non-zero digits are precisely in positions $\ell_2,\ell_1,\ell_0$.

  In the linear order representation, $\hat{x},\hat{y}$ have the
  following structure:
  Both begin (on the left) with $\hat{w}$ till position $k$.
  At position $k$, $\hat{x}=0$ and $\hat{y}=1$.
  This is followed in $\hat{x}$ by a block of $1$s, a block of $0$s, and another block of $1$s, and in $\hat{y}$ by blocks of the same lengths, but starting and ending with $0$s.

  Suppose $(x,y)\in S_a$, and in particular $\hat{x} \leq \hat{a} \leq \hat{y}$.
  Then $\hat{a}$ must agree with both $\hat{x}$ and $\hat{y}$ in all positions left of $k$.
  If $\hat{a}_k=0$ then $\hat{a} \leq \hat{y}$ holds, and the assumption $\hat{x} \leq \hat{a}$ implies that the next
  $\ell_2-\ell_1$ digits of $\hat{a}$ are all $1$s.
  Similarly, if $\hat{a}_k=1$ then the assumption $\hat{a} \leq \hat{y}$ implies that the next $\ell_2-\ell_1$ digits are $0$s.
  Converting this description of $\hat{a}$ to $a$ yields the claim for the case $t=2$.
\end{proof}

In light of \cref{L:e_in_Sa} we define the \textbf{enlarged cutsets} $\bar S_a$ which contain all edges $(x,y)$ which satisfy the condition in the corresponding clause of the \cref{L:e_in_Sa}.
Explicitly, an edge $(x,y)$ of type $t$ is in $\bar S_a$ if
\begin{itemize}[nosep]
\item $t\in\{-1,0\}$, and $\hat x=\hat{a}-1$, $\hat y=\hat{a}$.
\item $t=1$ and $a_i=x_i=y_i$ for all $i>\ell_0$ except possibly $i=k$, or
\item $t=2$ and $a_i=x_i=y_i$ for all $i>\ell_1$ except possibly $i=k$.
\end{itemize}
From here on we work with the cutsets $\bar S_a$.

\begin{lemma}\label{L:cutset_count}
  For an edge $e=(x,y)$ of type $t$, we have that
  \[
  \sum_{a : e\in \bar S_a} \beta^a \asymp
  \begin{cases}
    \beta^x & t=-1 \text{ or } t=0, \\
    \beta^x \prod_{i\leq\ell_0} (1+\beta_i) & t=1, \\
    \beta^x \prod_{i\leq\ell_1} (1+\beta_i) & t=2,
  \end{cases}
  \]
  where the implicit constants depend only on $\max \bar m$.
\end{lemma}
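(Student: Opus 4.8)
The plan is to use the fact that, by its very definition, the family of enlarged cutsets $\bar S_a$ containing a fixed edge $e=(x,y)$ is indexed by those $a\in\{0,1\}^*$ whose coordinates are \emph{free} on an explicit set of positions and \emph{pinned} to those of $x$ elsewhere. This makes the admissible set a product ("box") over coordinates, and since $\beta^a=\prod_i\beta_i^{a_i}$ splits over coordinates, the sum $\sum_{a:\,e\in\bar S_a}\beta^a$ factorizes. I would begin with the easy cases $t\in\{-1,0\}$: here the defining condition $\hat x=\hat a-1,\ \hat y=\hat a$ forces the unique admissible index $a=y$, so the sum equals $\beta^y$. As $x$ and $y$ differ in the single $\{0,1\}$-coordinate $k$, we have $\beta^x/\beta^y=\beta_k^{\pm1}$, which lies between $1/(\max\bar m-1)$ and $\max\bar m-1$; hence the sum is $\asymp\beta^x$.

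I would then treat $t\in\{1,2\}$ uniformly. Reading off \cref{L:e_in_Sa} (in the enlarged form), the admissible $a$ are exactly those with $a_i=x_i$ for all $i>\ell_{t-1}$ with $i\neq k$, while $a_i$ is free for $i\le\ell_{t-1}$ and for $i=k$ (note $k=1+\ell_t>\ell_{t-1}$, so $k$ is a genuinely separate free coordinate). Summing $\beta_i^{a_i}$ over the free coordinates and multiplying the pinned ones gives
\[
  \sum_{a:\,e\in\bar S_a}\beta^a
  =\Bigl(\prod_{\substack{i>\ell_{t-1}\\ i\neq k}}\beta_i^{x_i}\Bigr)\,(1+\beta_k)\prod_{i\le\ell_{t-1}}(1+\beta_i),
\]
where $(1+\beta_k)$ comes from the free coordinate $k$ and $\prod_{i\le\ell_{t-1}}(1+\beta_i)$ from the free coordinates below $\ell_{t-1}$.

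Next I would divide by the claimed right-hand side and simplify. Writing $\beta^x=\bigl(\prod_{i\le\ell_{t-1}}\beta_i^{x_i}\bigr)\,\beta_k^{x_k}\prod_{i>\ell_{t-1},\,i\neq k}\beta_i^{x_i}$ cancels both the pinned product above $\ell_{t-1}$ and the entire factor $\prod_{i\le\ell_{t-1}}(1+\beta_i)$, leaving
\[
  \frac{\sum_{a:\,e\in\bar S_a}\beta^a}{\beta^x\prod_{i\le\ell_{t-1}}(1+\beta_i)}
  =\frac{1+\beta_k}{\beta_k^{x_k}\prod_{i\le\ell_{t-1}}\beta_i^{x_i}}
  =\frac{1+\beta_k}{\beta_k^{x_k}\prod_{j=0}^{t-1}\beta_{\ell_j}},
\]
using that the nonzero coordinates of $x$ at positions $\le\ell_{t-1}$ are exactly $\ell_0,\dots,\ell_{t-1}$. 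Since each $\beta_i\in(0,1]$ and $x_k\in\{0,1\}$, the numerator lies in $[1,2]$ and the denominator is a product of at most $t+1\le 3$ factors each in $[1/(\max\bar m-1),1]$; hence the ratio lies in $[1,\,2(\max\bar m-1)^{3}]$. This is exactly the asserted $\asymp$ with constant depending only on $\max\bar m$.

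I expect the only step requiring genuine care to be the bookkeeping in the middle paragraph: correctly reading off from the definition of $\bar S_a$ which coordinates of $a$ are free and which are pinned to $x$, and thereby recognizing the admissible set as a product set so that the sum factorizes cleanly. This is precisely where the \emph{enlargement} of the cutsets pays off, since for the original $S_a$ the admissible index set is not a box. Everything afterward is the elementary observation that a bounded number of $\beta$-factors, each confined to a fixed subinterval of $(0,\infty)$ bounded away from $0$ and $\infty$ by constants depending only on $\max\bar m$, can contribute only a constant.
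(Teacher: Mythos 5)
Your proposal is correct and follows essentially the same route as the paper's proof: read off from the definition of $\bar S_a$ which coordinates of $a$ are pinned to $x$ and which are free, factorize the sum $\sum_a \beta^a$ over this product structure, and absorb the remaining boundedly many factors ($1+\beta_k$ and the $\beta_{\ell_j}$'s) into the implied constant. The only cosmetic differences are that you handle $t=1$ and $t=2$ in one uniform computation (the paper does $t=1$ and declares $t=2$ "almost identical") and you make the final constant $2(\max\bar m-1)^3$ explicit.
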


\begin{proof}
  The case $t\leq0$ is trivial since $a=x$ or $a=y$ and $\beta^x$ and
  $\beta^y$ differ by the bounded ratio $\beta_k$.

  In the case $t=1$, from the definition of $\bar S_a$ we have
  $a_i=x_i=y_i$ for $i>\ell_0$, except $i=k=\ell_1+1$.  In
  particular, $a_i=0$ for
  $i\in(\ell_0,\ell_1)$.  We are interested in the sum of $\beta^a$ over
  all $a$ with $e\in \bar S_a$.
  Since for $i\le \ell_0$ we can have any combination of $0$s and $1$s, this satisfies
  \[
  \sum_{a : e\in \bar S_a} \beta^a = \Big( \prod_{i>k} \beta_i^{x_i} \Big)
  (1+\beta_k) \beta_{\ell_1} \Big( \prod_{i\leq \ell_0} (1+\beta_i) \Big).
  \]
  Since $\beta$ is bounded from $0$ and above, and since $x_i=0$ for $i<k$  except $i=\ell_0,\ell_1$, the first term in this product is (up to constants) $\beta^x$, and the next two terms are bounded, giving the lemma.

  The case $t=2$ is almost identical, with $\ell_1$ replacing $\ell_0$.
\end{proof}

The next step is to compute the total conductance of a cutset.
Whereas previously we were interested in which cutsets contain an edge, now this requires the dual question: which edges are in a cutset.
Each cutset contains a unique edge of type 0 or -1, but can contain more edges of higher types.
For any $a$, the conductance of $\bar S_a$ is given by $C_a := \sum_{e\in \bar S_a} R_{e,a}^{-1}$.
Note that for an integer $a$ we have $[\log_2 a] = \max \{i : a_i\neq 0\}$.

\begin{lemma}\label{L:C_a_1}
  In the degree 1 mother group, we have that
  \[ C_a \asymp \beta^{-a} \prod_{i<\log_2 a} (1+\beta_i), \]
  where the constants depend only on $\max \bar m$.
\end{lemma}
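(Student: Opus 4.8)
The plan is to evaluate $C_a = \sum_{e\in\bar S_a} R_{e,a}^{-1}$ directly, organising the edges of $\bar S_a$ by type. Writing $R_e = C_e^{-1}$ and using the conductance $C_e\asymp\beta^{-x}$ of an edge $e=(x,y)$, the allocation formula gives $R_{e,a}^{-1} = C_e\beta^{-a}\sum_{a':\,e\in\bar S_{a'}}\beta^{a'}$, so \cref{L:cutset_count} converts this into $R_{e,a}^{-1}\asymp\beta^{-a}$ for every edge of type $-1$ or $0$, and $R_{e,a}^{-1}\asymp\beta^{-a}\prod_{i\le\ell_0}(1+\beta_i)$ for every edge of type $1$. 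Thus, up to the overall factor $\beta^{-a}$, the problem reduces to summing $\prod_{i\le\ell_0}(1+\beta_i)$ over the type-$1$ edges of $\bar S_a$, together with an $\asymp 1$ contribution from the type $\le 0$ edges.

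By \cref{C:0-1edges}, $\bar S_a$ contains exactly one edge of type $-1$ or $0$, namely the one joining $\hat a - 1$ to $\hat a$ in the linear order, so these contribute $\asymp\beta^{-a}$. To handle the type-$1$ edges, I would parametrise each by the pair $(\ell_0,\ell_1)$ of positions of its two lowest common ones (recall $k=\ell_1+1$), so that the edge endpoints have ones exactly at $\{\ell_1,\ell_0\}$ and $\{k,\ell_1,\ell_0\}$. Feeding this into the type-$1$ clause defining $\bar S_a$ shows that $e\in\bar S_a$ forces $a_{\ell_1}=1$, that $a$ has no ones strictly between $\ell_0$ and $\ell_1$, and that $a$ has no ones strictly above $k=\ell_1+1$ (while $a_k$ is unconstrained). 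Writing $j=\log_2 a$ for the top one of $a$, the last two conditions pin $\ell_1$ to one of the two values $j$ or $j-1$, the latter only when $a_{j-1}=1$ as well; for each admissible $\ell_1$, the value $\ell_0$ then ranges over an interval with upper end $\ell_1-1$ and lower end the next one of $a$ below $\ell_1$.

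The quantitative engine is that $P_{\ell_0}:=\prod_{i\le\ell_0}(1+\beta_i)$ grows geometrically, since consecutive ratios satisfy $P_{\ell_0}/P_{\ell_0-1}=1+\beta_{\ell_0}\ge 1+1/(\max\bar m-1)>1$; hence a sum of the $P_{\ell_0}$ over any interval is, up to a constant depending only on $\max\bar m$, equal to its largest term. Applying this to the (at most two) families, the $\ell_1=j$ family sums to $\asymp P_{j-1}=\prod_{i<j}(1+\beta_i)$ and the $\ell_1=j-1$ family to $\asymp P_{j-2}$, smaller by a bounded factor; the edge with $\ell_0=j-1,\ \ell_1=j$ always lies in $\bar S_a$, supplying the matching lower bound. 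Since $\prod_{i<j}(1+\beta_i)\ge 1$ absorbs the type $\le 0$ term, summing everything gives $C_a\asymp\beta^{-a}\prod_{i<\log_2 a}(1+\beta_i)$, with the degenerate cases $a=1$ (no type-$1$ edges, empty product) handled directly.

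I expect the enumeration of which type-$1$ edges lie in $\bar S_a$ to be the main obstacle: one must translate the digit conditions on $a$, $x$, $y$ through the linear-order transform and carefully isolate the two boundary configurations $\ell_1=j$ and $\ell_1=j-1$ (and the small cases where $a$ has a single one). Once this combinatorial bookkeeping is done, the geometric-sum estimate is routine.
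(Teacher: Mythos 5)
Your overall strategy---reduce to summing $\prod_{i\le\ell_0}(1+\beta_i)$ over the type-1 edges of $\bar S_a$ via \cref{L:cutset_count}, isolate the unique type-$\le 0$ edge via \cref{C:0-1edges}, then use geometric domination of the sum by its top term---is exactly the paper's. However, your enumeration of the type-1 edges of $\bar S_a$ is wrong, and the error enters at the first step: you assert that a type-1 edge has endpoints whose ones are \emph{exactly} $\{\ell_0,\ell_1\}$ and $\{\ell_0,\ell_1,k\}$. The definition of a type-1 edge only constrains the digits at positions $\le k$ (exactly one nonzero digit below $k-1$, and a common one at $k-1$); above $k$ the two endpoints share an arbitrary common prefix. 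Consequently your deduction that $e\in\bar S_a$ forces $a$ to have no ones strictly above $k$, hence $\ell_1\in\{j,j-1\}$ with $j=\log_2 a$, is false. Concretely, take $a$ with binary digits $a_4a_3a_2a_1a_0=10101$ (ones at positions $0,2,4$, so $j=4$), and the type-1 edge $e=(x,y)$ with $x_4\dots x_0=10110$ and $y_4\dots y_0=11110$, so $\ell_0=1$, $\ell_1=2$, $k=3$. Then $a_i=x_i=y_i$ for every $i>\ell_0$ except $i=k$, so $e\in\bar S_a$; yet $\ell_1=2\notin\{4,3\}$ and the endpoints have a one at position $4>k$.

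This gap matters for the half of the $\asymp$ that the paper actually uses downstream: the resistance bound in \cref{T:resistance} needs an \emph{upper} bound on $C_a$, and an enumeration that misses edges of $\bar S_a$ can only undercount $C_a$, so it cannot deliver that bound. (Your lower bound, via the edge with $\ell_0=j-1$, $\ell_1=j$, is fine.) The correct enumeration, which is the heart of the paper's proof, is: for every $\ell_0<\log_2 a$ there is exactly one type-1 edge of $\bar S_a$ with that $\ell_0$; its $\ell_1$ is the least position above $\ell_0$ at which $a$ has a one (so $\ell_1$ ranges over \emph{all} ones of $a$, not just the top one or two), its digits below $k=\ell_1+1$ vanish except at $\ell_0,\ell_1$, and its digits above $k$ agree with those of $a$. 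With this corrected list the sum becomes $\sum_{\ell_0<\log_2 a}\prod_{i\le\ell_0}(1+\beta_i)$, which is still dominated by its top term $\ell_0=j-1$, so your geometric-sum step and final assembly go through verbatim. The edges you missed happen to be harmless to the asymptotics, but a proof of the upper bound must account for them.
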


\begin{proof}
  Fix some $a\in\N$, and consider the cutset $\bar S_a$.
  The cutset contains exactly one edge of type $-1$ or $0$ (with $\hat{x}+1=\hat{y}=\hat{a}$).
  This edge has conductance $R_e^{-1} = \beta^{-x}$ or $\beta^{-y}$ (equivalent up to constants to $\beta^{-a}$), and assigns all of it to the cutset $\bar{S}_a$.
  The contribution to $C_a$ from edges $e=(x,y)$ of type 1 is given (using \cref{L:cutset_count}) by
  \begin{align*}
    \sum_{e\in \bar S_a} R_{e,a}^{-1}
    &= \sum_{e\in \bar S_a}
    \frac{\sum_{b : e\in \bar S_b} \beta^b} {R_e \beta^a} \\
    &\asymp \sum_{e\in \bar S_a}
    \frac{\beta^x \prod_{i\leq\ell_0} (1+\beta_i)} {\beta^x \beta^a} \\
    &= \beta^{-a} \sum_{e\in \bar S_a} \prod_{i\leq\ell_0} (1+\beta_i).
  \end{align*}

  We now argue that, from the definition of $\bar S_a$, for each $\ell_0\leq \log a$ there is a unique edge $(x,y)$ of type $1$ in $\bar S_a$ with the given $\ell_0$.
  For $\ell_0>\log a$ there are no edges of type $1$ in $\bar S_a$.
  To see this, let us fix $\ell_0$ and $a$, we try to recover the edge $(x,y)$.
  First we find $\ell_1$, which must be the minimal  $i>\ell_0$ with $a_i=1$.
  This is the only choice, since $a_{\ell_1}$ cannot be $0$ if $(x,y)\in\bar S_a$, and since $a_i=0$ for all
  $i\in(\ell_0,\ell_1)$.
  Such $\ell_1$ can be found if and only if $\ell_0 < \log a$.
  We now can identify $x$ and $y$, since $x_i=y_i=0$ for all $i\leq \ell_1$ except $i=\ell_0,\ell_1$ where $x_i=y_i=1$.
  Moreover, $x_i=y_i=a_i$ for all $i>\ell_1+1$.
  Finally, for $i=\ell_1+1$ we have that $x_i$,$y_i$ are $0$ and $1$ in some order.

  Thus we have
  \[
    C_a \asymp \beta^{-a} + \beta^{-a} \sum_{\ell_0\leq \log_2 a} \prod_{i\leq \ell_0} (1+\beta_i)
  \]
  where the first term is from the edge of type $0$ or $-1$ and the sum from edges of type $1$.
  Since $\beta_i$ are bounded away from $0$, the sum is dominated up to a constant factor by the largest term and we get
  \[
    C_a \asymp \beta^{-a} \prod_{i < \log_2 a} (1+\beta_i)
  \]
  as claimed.
\end{proof}

\begin{lemma}\label{L:C_a_2}
  In $\cG_{2,\mbar}$ we have that $C_a \asymp \beta^{-a} (\log_2 a) \prod_{i<\log_2 a} (1+\beta_i)$, where the constants depend only on $\max \bar m$.
\end{lemma}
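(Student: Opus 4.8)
The plan is to follow the structure of the proof of \cref{L:C_a_1}, decomposing $C_a = \sum_{e\in\bar S_a} R_{e,a}^{-1}$ according to edge type. The contributions of edges of type $-1$, $0$, and $1$ are precisely those already estimated in \cref{L:C_a_1}, and together they sum to $\asymp \beta^{-a}\prod_{i<\log_2 a}(1+\beta_i)$. It therefore suffices to show that the type-$2$ edges contribute $\asymp \beta^{-a}(\log_2 a)\prod_{i<\log_2 a}(1+\beta_i)$; this dominates the lower-type contribution by a factor of $\log_2 a$ and so produces the claimed asymptotics for $C_a$.

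For the type-$2$ contribution I would first apply \cref{L:cutset_count} exactly as in the degree-$1$ computation: for a type-$2$ edge $e=(x,y)\in\bar S_a$,
\[
  R_{e,a}^{-1} = \frac{\sum_{b:\,e\in\bar S_b}\beta^b}{R_e\,\beta^a}
  \asymp \frac{\beta^x\prod_{i\leq\ell_1}(1+\beta_i)}{\beta^x\,\beta^a}
  = \beta^{-a}\prod_{i\leq\ell_1}(1+\beta_i),
\]
so that the total type-$2$ contribution is $\asymp \beta^{-a}\sum_e\prod_{i\leq\ell_1}(1+\beta_i)$, the sum ranging over type-$2$ edges of $\bar S_a$.

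The key combinatorial step is the parametrization of these edges, which is where the extra logarithm appears. As in \cref{L:C_a_1}, once $\ell_1$ is fixed the value $\ell_2$ is forced: the intermediate digits $a_i$ for $\ell_1<i<\ell_2$ must vanish and $a_{\ell_2}=1$ (since $\ell_2>\ell_1$ and $\ell_2\neq k=\ell_2+1$), so $\ell_2$ is the minimal index $i>\ell_1$ with $a_i=1$, which exists precisely when $\ell_1<\log_2 a$. The bits of $x,y$ above $\ell_1$ are then determined by agreement with $a$ (with $x_k,y_k$ equal to $0,1$ in some order at $k=\ell_2+1$), and both have a single additional nonzero bit at a position $\ell_0\in\{0,\dots,\ell_1-1\}$. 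Unlike the degree-$1$ case, this leaves $\ell_1$ distinct choices of $\ell_0$, each yielding a distinct type-$2$ edge of $\bar S_a$ carrying the same weight $\prod_{i\leq\ell_1}(1+\beta_i)$. Grouping by $\ell_1$ therefore gives
\[
  \sum_e\prod_{i\leq\ell_1}(1+\beta_i)
  = \sum_{\ell_1<\log_2 a}\ell_1\prod_{i\leq\ell_1}(1+\beta_i).
\]

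Finally, since each $\beta_i$ is bounded away from $0$ in terms of $\max\bar m$, the summand grows geometrically in $\ell_1$ — the polynomial prefactor $\ell_1$ does not disturb this — so the sum is dominated up to a constant by its largest term, attained at the maximal admissible $\ell_1$, just below $\log_2 a$. This largest term is $\asymp (\log_2 a)\prod_{i<\log_2 a}(1+\beta_i)$, and multiplying by $\beta^{-a}$ gives the lemma. The main obstacle, exactly as in the degree-$1$ case, is the edge-parametrization: one must check carefully that fixing $\ell_1$ forces $\ell_2$, that the only remaining freedom is the single position $\ell_0<\ell_1$, and that the resulting multiplicity $\ell_1$ is precisely what converts the degree-$1$ estimate into the extra factor of $\log_2 a$.
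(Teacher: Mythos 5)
Your proposal is correct and follows essentially the same route as the paper's proof: decompose $C_a$ by edge type, use \cref{L:cutset_count} to evaluate $R_{e,a}^{-1}$ for a type-$2$ edge, observe that each pair $\ell_0<\ell_1<\log_2 a$ determines a unique type-$2$ edge of $\bar S_a$ (with $\ell_2$ forced to be the first $1$ of $a$ above $\ell_1$), and conclude by geometric domination of the resulting sum. Your write-up is in fact somewhat more explicit than the paper's, which simply cites the type-$1$ parametrization from \cref{L:C_a_1} for this step.
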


\begin{proof}
  This is very similar to the proof of \cref{L:C_a_1}.
  The main change and additional contribution now is from edges of type $2$, and so we need to understand edges of type $2$ in $\bar S_a$.
  We claim that for any $\ell_1 < \log_2 a$, there are exactly $\ell_1$ edges $e\in\bar S_a$ with that $\ell_1$.
  To see this note that given $\ell_1 <
  \log a$, and any $\ell_0<\ell_1$, there is a unique edge $x,y$ of type $2$ with those $\ell_0,\ell_1$ in the cutset.
  (This is just as the type $1$ case in the previous lemma.)
  Thus the contribution to $C_a$ from edges of type 2 is up to constants
  \[
    \beta^{-a} \sum_{\ell_1 < \log_2 a} k \prod_{i\leq \ell_1} (1+\beta_i)
    \asymp \beta^{-a} (\log_2 a) \prod_{i < \log_2 a} (1+\beta_i),
  \]
  since the sum is again dominated by its largest term.
  This dominates the contribution from edges of type $-1,0,1$, and so gives the claimed total conductance.
\end{proof}

\begin{proof}[Proof of \cref{T:resistance}]
  To bound the resistance from $\pi^{-1}([0,2^s)$ to   $\pi^{-1}([2^t,\infty))$, we separate cutsets into groups according to   $[\log_2 a] \in [s,t)$.
  For $k\in[s,t)$, there are $2^k$ choices for $a$ with $[\log_2 a]=k$.

  In the case $d=1$,
  the contribution from the cutsets $\bar S_a$ with $[\log_2 a] = k$ is given by \cref{L:C_a_1}:
  \[
    \sum_{a=2^k}^{2^{k+1}-1} C_a^{-1}
    \asymp
    \sum_{a=2^k}^{2^{k+1}-1} \beta^a \prod_{i\leq k}
    (1+\beta_i)^{-1} = \beta_k.
  \]
  Since $\beta_k$ is bounded, the total resistance is at least $c(t-s)$.

  In the case $d=2$, \cref{L:C_a_2} gives an extra factor of $1/k$ in the $k$th term, so
  \[
    \sum_{a=2^k}^{2^{k+1}-1} C_a^{-1} \asymp
    \sum_{a=2^k}^{2^{k+1}-1} \beta^a k^{-1} \prod_{i\leq k}
    (1+\beta_i)^{-1} = \beta_k k^{-1}.
  \]
  Therefore
  \[
    \Res\big(\hat\pi^{-1}[0,2^s), \hat\pi^{-1}[2^t,\infty)\big) \geq
    \sum_{k=s}^{t-1} c/k
    \asymp \log(t)-\log(s)
  \]
  as Claimed.
\end{proof}

\begin{proof}[Proof of \cref{T:recurrent}]
  Fix $s=0$ in \cref{T:resistance}.
  We find that $\Res\big(\hat\pi^{-1}(0), \hat\pi^{-1}[2^t,\infty)\big)$ is unbounded as $t\to\infty$.
  Recurrence of the graph follows.
\end{proof}

\paragraph{Acknowledgments.}
G.A. was supported by Israeli Science Foundation grant 957/20.
OA is supported in part by NSERC.
BV was supported by the Canada Research Chair program and the NSERC Discovery Accelerator grant.

\bibliographystyle{plain}
\bibliography{automata}

\end{document}